\documentclass[]{amsart}
\usepackage{amsmath, amsfonts,amsthm,amssymb,mathrsfs}
\usepackage{enumerate}
\usepackage[dvipdfmx]{graphicx}
\usepackage{ascmac}
\usepackage[arrow,matrix]{xy}
\usepackage[dvips]{color} 
\usepackage{longtable}
\usepackage{mathtools}


\newtheorem{theo}{Theorem}

\newtheorem{defi}{Definition}
\newtheorem{exa}{Example}
\newtheorem{prop}{Proposition}
\newtheorem{cor}{Corollary}


\makeatletter
    
    \@addtoreset{equation}{section}
  \makeatother

\def\Z{\mathbb{Z}}
\def\C{\mathbb{C}}

\def\N{\mathbb{N}}
\def\H{\mathbb{H}}
\def\R{\mathbb{R}}

\def\e{\equiv}

\begin{document}

\title[Growth rates of cocompact hyperbolic Coxeter groups]{Growth rates of cocompact hyperbolic Coxeter groups and $2$-Salem numbers}
\author{Yuriko Umemoto}
\address{Department of Mathematics, Osaka City University, 558-8585, Osaka, Japan}
\email{yuriko.ummt.77@gmail.com}
\subjclass[2010]{Primary~20F55, Secondary~20F65}
\keywords{Hyperbolic Coxeter group; growth rate; 2-Salem number}
\date{}

\maketitle

\begin{abstract}
By the results of Cannon, Wagreich and Parry, it is known that the growth rate of a cocompact Coxeter group in $\H^2$ and $\H^3$ is a Salem number.
Kerada defined a $j$-Salem number, which is a generalization of a Salem number.
In this paper, we realize infinitely many $2$-Salem numbers as the growth rates of cocompact Coxeter groups in $\H ^4$.
Our Coxeter polytopes are constructed by successive gluing of Coxeter polytopes which we call Coxeter dominoes. 
\end{abstract}

\section{Introduction}
Let $\H^n$ denote hyperbolic $n$-space.
A Coxeter polytope $P\subset \H^n$ is a convex polytope of dimension $n$ all of whose dihedral angles are of the form $\pi /m$, where $m\geq 2$ is an integer.
By a well known result, the group $W$ generated by reflections with respect to the hyperplanes bounding $P$ is a discrete subgroup of the isometry group of $\H^n$ whose fundamental domain is $P$, and $W$ itself is called a hyperbolic Coxeter group.
If $P$ is compact, $W$  is called cocompact.

As typical quantities related to Coxeter groups, the growth series and the growth rates of them will be  studied.
The growth series is a formal power series (see (\ref{series})), and Steinberg \cite{St} proved that the growth series of an infinite Coxeter group is an expansion of a rational function (see (\ref{st})).
As a result, the growth rate (see (\ref{rate})) which is defined as the radius of convergence of the growth series is an algebraic integer.

The focus of this work is the arithmetic property of the growth rates of cocompact hyperbolic Coxeter groups.
Cannon, Wagreich, and Parry \cite{CW, Pa} proved that the growth rates of cocompact hyperbolic Coxeter groups in $\H^2$ and $\H^3$ are Salem numbers.
Here a Salem number is defined as a real algebraic integer $\alpha>1$ such that $\alpha^{-1}$ is an algebraic conjugate of $\alpha$ and all algebraic conjugates of $\alpha$ other than $\alpha$ and $\alpha^{-1}$ lie on the unit circle.
Their definition includes a quadratic unit as a Salem number while the ordinary definition does not include it (see \S 3.2).
On the other hand, the growth rates of cocompact hyperbolic Coxeter groups in $\H^4$ are not Salem numbers in general (see \cite{KP}).

As a kind of generalization of a Salem number, Kerada \cite{Ker} defined a $j$-Salem number.
In \cite{ZZ}, T. Zehrt and C. Zehrt constructed infinitely many growth functions of cocompact hyperbolic Coxeter groups in $\H^4$ whose denominator polynomials have the same distribution of roots as $2$-Salem polynomials. 
More precisely, their Coxeter polytopes are the Coxeter garlands in $\H^4$ built by the compact truncated Coxeter simplex described by the Coxeter graph which is on the left side of Figure \ref{two-trun} (cf. \cite[Fig.1]{ZZ}).
This polytope is constructed by the extended Coxeter simplex with two ultraideal vertices described by the Coxeter graph on the right side of Figure \ref{two-trun}.
\begin{figure}[h]
\begin{center}
 \includegraphics [width=250pt, clip]{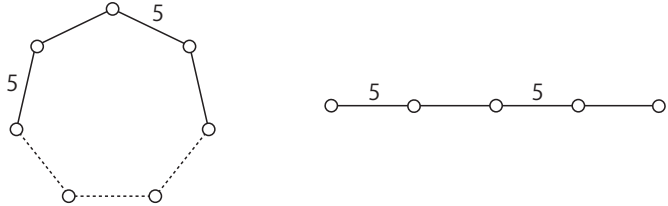}
 \caption{The Coxeter graphs of the compact truncated Coxeter $4$-simplex and its underlying extended $4$-simplex }
\label{two-trun}
\end{center}
\end{figure}
Inspired by their work, as the main result of this paper, we realize infinitely many $2$-Salem numbers as the growth rates of hyperbolic Coxeter groups in $\H^4$ as follows.

We focus on the compact Coxeter polytope $T\subset \H^4$ whose Coxeter graph is on the left side of Figure \ref{2graphs1}, and which was first described by Schlettwein \cite{Sc}.
The nodes of the graph describe the facets of the Coxeter polytope, and the (non-dotted) edges describe the dihedral angle formed by two intersecting facets (see \S \ref{hyp}). 
\begin{figure}[h]
\begin{center}
 \includegraphics [width=250pt, clip]{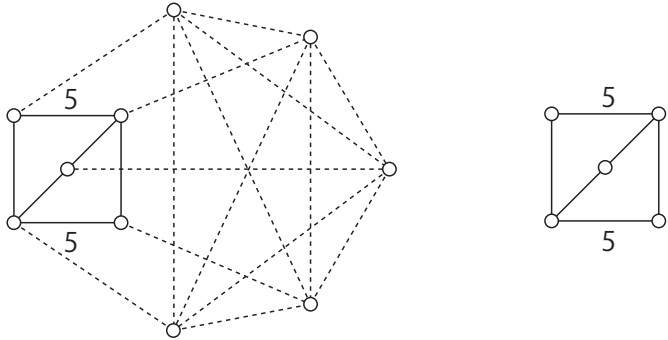}
 \caption{The Coxeter graphs of the compact totally truncated Coxeter $4$-simplex $T$ and its underlying extended $4$-simplex }
\label{2graphs1}
\end{center}
\end{figure}
This polytope is constructed by the extended Coxeter simplex of dimension $4$ with all vertices outside of $\H^4$ whose Coxeter graph is on the right side of Figure \ref{2graphs1}.
More precisely, we truncate all vertices of this simplex and replace them by facets orthogonal to all facets intersecting them, which we call orthogonal facets (see \S 4).
This construction yields the compact Coxeter polytope $T$.
This kind of construction is explained by Vinberg \cite[Proposition 4.4]{V}, for example.
Since we can glue many copies of this along isometric orthogonal facets, $T$ is the building block for infinitely many compact Coxeter polytopes which we call {\em Coxeter dominoes} (see Figure \ref{domino1}). 
Note that each orthogonal facet of $T$ is one of the three types $A$, $B$, and $C$ (see \S 5.1).

 \begin{figure}[h]
\begin{center}
 \includegraphics [width=280pt, clip]{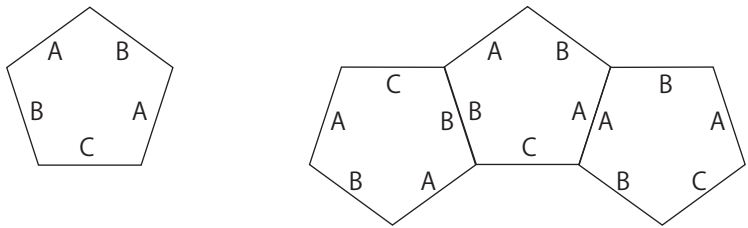}
\caption{The images of a Coxeter domino $T$ and Coxeter donimoes}
\label{domino1}
\end{center}
\end{figure}
Then we prove all that cocompact hyperbolic Coxeter groups with respect to Coxeter dominoes built by $T\subset \H^4$ have growth functions whose denominator polynomials $Q_{\ell,m,n}(t)$ satisfy the following property: all the roots of $Q_{\ell,m,n}(t)$ are on the unit circle except two pairs of real roots (see Theorem \ref{poles}).
Finally, we prove our main result providing infinite families whose growth rates are $2$-Salem numbers under certain restrictions for $(\ell, m, n)$.  



For the calculation of the growth functions of Coxeter dominoes, we adapt the formula developed by 
T. Zehrt and C. Zehrt \cite{ZZ} (see Proposition \ref{z-form-pro}), which allows us to compute the growth functions of such infinite series at once.
For the study of the denominator polynomials, we use the adaption of Kempner's result \cite{Kem} to palindromic polynomials due to T. Zehrt and C. Zehrt  about the distribution of the roots of the polynomials.
To show the irreducibility of denominator polynomials, we analyzed case by case by elementary arithmetic (cf. Theorem \ref{main}).

The paper is organized as follows.
In \S 2, we review hyperbolic space and hyperbolic Coxeter groups.
In \S 3.1, we collect some basic results of the growth functions and growth rates of Coxeter groups, and in \S 3.2, we introduce $2$-Salem numbers and their properties.
In \S 4, we describe the compact totally truncated Coxeter simplex $T$ in $\H ^4$ giving rise to  Coxeter dominoes. 
Finally, in \S 5 we state and prove the main theorem.
We also get the result that there are infinitely many $2$-Salem numbers as the growth rates of cocompact hyperbolic Coxeter groups with respect to the Coxeter garlands built by the truncated Coxeter simplex described by the Coxeter graph which is on the left side of Figure \ref{two-trun} by using the same idea as for the main theorem, but we omit to write it in this paper. 

\section*{acknowledgement}
This work is a part of my PhD thesis project supervised by Prof. Ruth Kellerhals and Prof. Yohei Komori. 
I would like to thank them for their sensible advices and instructive discussions.
This work was partially supported by the JSPS Institutional Program for Young Researcher Overseas Visits ``Promoting international young researchers in mathematics and mathematical sciences led by OCAMI''. 

\section{Cocompact hyperbolic Coxeter groups}
\label{hyp}
\subsection{Hyperbolic convex polytopes}
Let $\R^{n,1}$ be the real vector space $\R^{n+1}$ equipped with the {\em Lorentzian inner product} 
$x \circ y:=x_1y_1+\cdots +x_{n}y_{n}-x_{n+1}y_{n+1}$, where $x=(x_1,...,x_{n+1})$, $y=(y_1,...,y_{n+1}) \in \mathbb{R}^{n+1}$.
A vector $x \in \R^{n,1}$ is {\em space-like} (resp. {\em light-like, time-like}) if $x\circ x >0$ (resp. $x\circ x =0$, $x\circ x<0$). 
The set 
$$
C:=\{ x\in \R^{n,1} \,|\,  x\circ x =0\}
$$
is a cone in $\R^{n+1}$ formed by all light-like vectors.
Space-like vectors lie outside $C$ and time-like vectors lie inside $C$. 
The set
$$
\H^n:=\{x=(x_1,...,x_{n+1})\in \R^{n,1} \,|\, x\circ x =-1, x_{n+1}>0\}
$$
is a hyperboloid lying inside the upper half of $C$ and is called {\em hyperbolic $n$-space} when equipped with the metric 
$$
\cosh d_{\H}(x,y):=-x\circ y.
$$
A nonempty subset $H\subset \H^n$ is a {\em hyperplane of $\H^n$} if there exists an $n$-dimensional vector subspace $V\subset \R^{n+1}$ such that $H=V\cap \H^n$.
It is equivalent to say that $V$ is a {\em time-like subspace} of $\R^{n,1}$, that is, $V$ contains a time-like vector.
So we can represent $V$ by the {\em Lorentzian orthogonal complement} $e^L:=\{ x \in \R^{n,1}\,|\, x\circ e =0 \}$ for some unit space-like vector $e\in \R^{n,1}$ (see \cite[Exercise 3.1, No.10]{R}).
Hence, $H=e^L\cap \H^n$ and we often use the notion $\widehat{H}:=e^L$.
In a similar way, we denote by $(e^L)^-:=\{ x\in \R^{n,1} \,|\, x\circ e \leq 0\}$ the half space of $\R^{n,1}$ bounded by $\widehat{H}$, and put $H^-:=(e^L)^-\cap \H^n$ and $\widehat{H}^-:=(e^L)^-$. 
  
A {\em convex polytope} $P \subset \H^n$ of dimension $n$ is defined by the intersection of finitely many closed half spaces of $\H^n$ 
\begin{equation}
P=\bigcap _{i\in I} H_i^- 
\label{polytope}
\end{equation}
containing a nonempty open subset of $\H^n$.
We remark that $H_i=e_i^L\cap \H^n$ for a unit space-like vector $e_i \in \R^{n,1}$ such that $e_i$ is directed outwards with respect to $P$.
We always assume that for any proper subset $J\subsetneq I$, $P\subsetneq \cap _{j \in J}H_j^-$ is satisfied.
For a subset $J\subset I$, the nonempty intersection $F:=P\cap (\cap _{j\in J} H_j) $ is called a {\em face} of $P$.
Observe that $F$ itself can be considered as a convex hyperbolic polytope of some dimension.
A face $F\subset P$ of dimension $n-1$, $1$ or $0$ is called a {\em facet, edge}, or {\em vertex} of $P$ respectively. 

The mutual disposition of hyperplanes bounding $P$ is as follows (cf. \cite[p.67-p.71]{R}, \cite[p.41]{V}).
Two hyperplanes $H_i$ and $H_j$ in $\H^n$ intersect each other if and only if $|e_i\circ e_j|<1$, and the angle between $H_i$ and $H_j$ of $P$ is defined as a real number $\theta _{ij} \in [0,\pi[$ satisfying 
\begin{equation}
\cos \theta_{ij} =-e_i \circ e_j.
\label{cos}
\end{equation}
We call $\theta_{ij}$ the {\em dihedral angle} between $H_i$ and $H_j$ of $P$, and denote
$$
\angle H_i^-H_j^- :=\theta _{ij}.
$$
The hyperplanes $H_i$ and $H_j$ don't intersect if and only if 
\begin{equation*}
e_i \circ e_j\leq -1.
\label{-1}
\end{equation*}
As a remark, $e_i \circ e_j\not\geq 1$ since $H_i^- \not \subset H_j^-$ for any $i\not= j$ under the above assumption.
More precisely, if $\widehat{H_i}$ and $\widehat{H_j}$ don't intersect in $\H^n \cup C \setminus \{0\}$, then $\cosh d_{\H}(H_i, H_j)=-e_i\circ e_j$, where $d_{\H}(H_i,H_j)$ is the hyperbolic length of the unique geodesic orthogonal to $H_i$ and $H_j$.

The {\em Gram matrix  $G(P)$ of $P$} is defined as the symmetric matrix $(g_{ij}):=(e_i\circ e_j)$, $i,j\in I$, with $g_{ii}=1$.

A convex polytope $P \subset \H^n$ of dimension $n$ is called {\em acute-angled} if all of its dihedral angles are less than or equal to $\pi/2$.
For acute-angled polytopes in $\H^n$, Vinberg \cite{V} developed a complete combinatorial description in terms of its Gram matrix $G(P)$.
We will discuss it in \S 4.1.
\subsection{Hyperbolic Coxeter polytopes and their associated hyperbolic Coxeter groups}
A {\em Coxeter polytope} $P\subset \H^n$ of dimension $n$ is a convex polytope of dimension $n$ all of whose dihedral angles are of the form $\pi/m$, where $m\geq 2$ is an integer.
For the Coxeter polytope described by (\ref{polytope}), 
the group $W$ generated by the set $S=\{s_i \,|\, i\in I\}$ of reflections with respect to the bounding hyperplanes $H_i$ in $\H^n$ is a discrete subgroup of the isometry group $Isom(\H^n)$ of $\H^n$, and $P$ is a fundamental polytope of $W$. 
Furthermore, $(W,S)$ is a Coxeter system (see \S 3.1) with relations $s_i^2=id$, and $(s_is_j)^{m_{ij}}=id$ for $i\not=j$, if $\angle H_i^- H_j^-=\pi/m_{ij}$.
If $P$ is compact, we call $W$ a {\em cocompact hyperbolic Coxeter group}.  
It is convenient to associate a Coxeter graph $\Gamma$ to a Coxeter polytope $P$.
Represent each bounding hyperplane $H_i$ (or reflection $s_i\in S$) by a node $\nu _i$, and join two nodes $\nu _i$ and $\nu _j$ by a single edge labeled $m_{ij}$ if $\angle H_i^- H_j^-=\pi/m_{ij}$, $m_{ij}\geq 3$, or by a dotted edge if $m_{ij}$ is not defined. 
We do not join $\nu _i$ and $\nu _j$ if $H_i$ and $H_j$ are orthogonal, and we omit the label for $m_{ij}=3$.  

\begin{exa}
\label{237}
The following Coxeter graph describes a compact hyperbolic Coxeter triangle in $\H^2$ with angles $\pi/2, \pi/3,$ and $\pi/7$.
\begin{figure}[h]
 \includegraphics [width=60pt, clip]{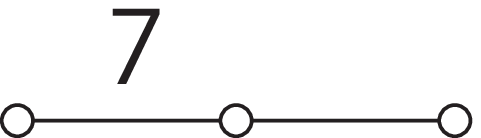}
\label{2graphs}
\end{figure}
\end{exa}

\begin{exa}
\label{zehrt}
The following Coxeter graph represents a compact hyperbolic Coxeter polytope in $\H^4$ $($cf. \cite[Example 2]{ZZ}$)$.
\begin{figure}[h]
 \includegraphics [width=150pt, clip]{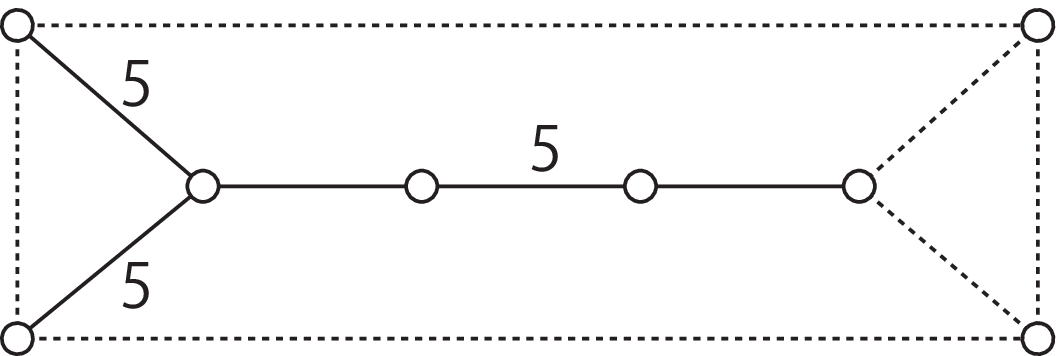}
\label{2graphs}
\end{figure}
\end{exa}

\section{Growth rates of cocompact hyperbolic Coxeter groups}
\subsection{Growth functions of Coxeter groups}
\label{growth}
In this section, we shall introduce the quantity $\tau$ for the hyperbolic Coxeter groups.
At first, we shall consider the general situation.
Let $(W, S)$ be a pair consisting of a group $W$ and its finite generating set $S$.
The {\em word length} of an element $w \in W$ with respect to $S$ is defined by 
$$
l_S(w):=\min \, \{ k\in \N \cup \{0\}\,|\, w=s_1s_2\cdots s_k, s_i\in S \}.
$$
By convention, $l_S(id)=0$. 
The {\em growth series} of $(W,S)$ is defined by the power series
\begin{equation}
f_S(t):=\sum _{w\in W} t^{l_S(w)} = \sum _{k\geq 0} a_k t^k = 1+|S|t+\cdots,\, t\in \C,
\label{series}
\end{equation}
where $a_k$ is the number of elements $w \in W$ with $l_S(w)=k$ and $|S|$ denotes the number of $S$.
The {\em growth rate} of $(W,S)$ is defined by
\begin{equation}
\tau:= \limsup _{k\rightarrow \infty} \sqrt[k]{a_k},
\label{rate}
\end{equation}
 which is the inverse of the radius of convergence $R$ of $f_S(t)$, that is, $\tau =R^{-1}$.

From now on, we focus on the growth series and the growth rate of a Coxeter group, wherefore we review the notions of Coxeter groups.
A pair $(W, S)$ is called a {\em Coxeter system} if generators $s,t \in S$ satisfy relations of the type $(st)^{m_{s,t}}=id$, $m_{s,t}=m_{t,s}$, and $m_{s,s}=1$.
We call the group $W$ itself a {\em Coxeter group}.
In the sequel, we often do not distinguish between Coxeter group and its underlying Coxeter system.
It is convenient to use the {\em Coxeter graph} $\Gamma$ associated to $(W, S)$ whose nodes $\nu _s$ correspond to the generators $s\in S$, and two nodes $\nu_s, \nu_t$ are joined as follows.
They are joined by a single edge labeled $m_{s,t}$ if $m_{s,t}\geq 3$ and are not joined if $m_{s,t}=2$.
Usually, the label is omitted if $m_{s,t}=3$.
Two nodes $\nu _s$ and $\nu _t$ are joined by a dotted edge if $m_{s,t}$ is not defined.
All connected Coxeter graphs with finite associated Coxeter groups are classified \cite{Co}.

First, we consider a finite Coxeter group $(W,S)$.
It is obvious that its growth series (\ref{series}) is a polynomial. 
Solomon \cite[Corollary 2.3]{So} gave an explicit formula for the growth polynomial of $(W,S)$, which allows us to compute it by using the {\em exponents} of $(W,S)$, that is, 
\begin{equation}
f_S(t)=\prod _{i=1}^k [m_i+1],
\label{solo}
\end{equation}
where $[m]=1+t+\cdots +t^{m-1}$, $1=m_1\leq m_2 \leq ...\leq m_k$.

Next, we consider an infinite Coxeter group $(W,S)$.
Steinberg \cite[Corollary 1.29]{St} derived a formula for the growth series $f_S(t)$ of $(W,S)$, which allows us to compute it by using the growth polynomial $f_T(t)$ of its finite Coxeter subgroups $(W_T,T)$ generated by a subset $T\subset S$, that is, 
\begin{equation}
\frac{1}{f_S(t^{-1})}=\sum _{\stackrel{T \subset S}{|W_T|<\infty}} \frac{(-1)^{|T|}}{f_T(t)}=1-\frac{|S|}{[2]}+\cdots.
\label{st}
\end{equation}
Here, each $f_T(t)$ is of the form (\ref{solo}).
This means that $\frac{1}{f_S(t^{-1})}$ is expressible as a rational function, say
\begin{equation*}
\frac{1}{f_S(t^{-1})}=\frac{\widetilde{q}(t)}{\widetilde{p}(t)},
\label{prime}
\end{equation*}
where $\widetilde{q}(t),\widetilde{p}(t) \in \Z[t]$ are relatively prime and monic of the same degree, say $n$.
Furthermore, the constant term of $\widetilde{p}(t)$ is $\pm 1$ by (\ref{solo}) and (\ref{st}).
Hence, we have
\begin{equation}
f_S(t)=\frac{p(t)}{q(t)},
\label{rational}
\end{equation}
where $p(t)=t^n\widetilde{p}(t^{-1}), q(t)=t^n\widetilde{q}(t^{-1}) \in \Z[t]$.
Both polynomials $p(t)$ and $q(t)$ are relatively prime over $\Z$, and we call the growth series $f_S(t)$ described by the form (\ref{rational}) the {\em growth function} of $(W, S)$. 
Observe that the smallest positive root of $q(t)$ equals the radius of convergence $R$.
This means $q(R)=R^n\widetilde{q}(R^{-1})=0$, and since $\widetilde{q}(t) \in \Z[t]$ is monic, the growth rate $\tau =R^{-1}$ is an algebraic integer. 

\begin{exa}
The growth function of the cocompact hyperbolic Coxeter group described by the Coxeter graph in Example \ref{237} is as follows: 
\begin{equation}
f_S(t)=\frac{(t+1)^2 (t^2+t+1) (t^6+t^5+t^4+t^3+t^2+t+1)}{t^{10}+t^9-t^7-t^6-t^5-t^4-t^3+t+1}.
\label{237f}
\end{equation}
\end{exa}

\begin{exa}
The growth function of the cocompact hyperbolic Coxeter group described by the Coxeter graph in Example \ref{zehrt} is as follows  $($cf. \cite[Theorem 1]{ZZ}$) :$
\begin{equation}
f_S(t)=\frac{(t+1)^4 (t^2-t+1) (t^2+t+1) (t^4-t^3+t^2-t+1) (t^4+t^3+t^2+t+1)}{t^{16}-4 t^{15}+t^{14}+t^{12}+t^{11}+2 t^9+2 t^7+t^5+t^4+t^2-4 t+1}.
\label{zehrtf}
\end{equation}
\end{exa}

\subsection{Growth rates and $2$-Salem numbers}
In the previous subsection 3.1, we see that the growth rate of a Coxeter group is an algebraic integer.
Cannon, Wagreich and Parry (see \cite{Pa}, for example) showed that the growth rate of a cocompact hyperbolic Coxeter group in $\H^2$ or $\H^3$ is a Salem number or a quadratic unit.
We recall the notion of Salem numbers.

A {\em Salem number} is a real algebraic integer $\alpha >1$ all of whose other conjugate roots $\omega $  satisfy $|\omega |\leq 1$ and at least one conjugate root is on the unit circle  (cf. \cite[Definition 5.2.2]{BDGPS}, \cite[p.293]{GhHi}). 
Call the minimal polynomial $p_{\alpha}(t)$
of $\alpha$ a  {\em Salem polynomial}.
The following proposition is a well known fact about Salem polynomials (cf. \cite[p.294]{GhHi}). 

\begin{prop}
A Salem polynomial $p_{\alpha}(t)$ is palindromic of even degree, that is, $p_{\alpha}(t)=t^np_{\alpha}(t^{-1})$, where $n$ is the degree of $p_{\alpha}(t)$.
\end{prop}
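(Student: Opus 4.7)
The plan is to exploit the assumed conjugate $\beta$ of $\alpha$ lying on the unit circle, together with the fact that $p_\alpha(t)\in\Z[t]$ is monic and irreducible of degree $n$, in order to compare $p_\alpha$ with its reciprocal polynomial $p_\alpha^*(t):=t^n p_\alpha(t^{-1})\in\Z[t]$.

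First I would check that $\beta$ cannot be real: if $\beta=\pm 1$ were a root of the irreducible monic polynomial $p_\alpha$, then $p_\alpha(t)=t\mp 1$ would force $\alpha=\pm 1$, contradicting $\alpha>1$. Hence $\beta$ is non-real, so the complex conjugate $\bar\beta$ is also a root of $p_\alpha$, and since $|\beta|=1$ one has $\bar\beta=\beta^{-1}$. Thus $\beta$ is a common root of $p_\alpha(t)$ and $p_\alpha^*(t)$. Because $p_\alpha$ is the minimal polynomial of $\beta$ over $\Q$ and $\deg p_\alpha^*=n=\deg p_\alpha$, it follows that $p_\alpha^*=c\,p_\alpha$ for some $c\in\Q$. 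Comparing leading and constant coefficients of both sides (noting that $p_\alpha$ is monic with some constant term $a_0\in\Z$, whereas $p_\alpha^*$ has leading coefficient $a_0$ and constant term $1$) yields $c=a_0=1/a_0$, so $a_0=\pm 1$ and $c=\pm 1$.

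To rule out $c=-1$, I would evaluate at $t=1$: the identity $p_\alpha^*(1)=-p_\alpha(1)$ reduces to $p_\alpha(1)=-p_\alpha(1)$, whence $p_\alpha(1)=0$; irreducibility would then force $p_\alpha(t)=t-1$, contradicting $\alpha>1$. Therefore $c=1$ and $p_\alpha$ is palindromic. Finally, to see that $n$ must be even, I would apply the same kind of argument at $t=-1$: if $n$ is odd and $p_\alpha$ is palindromic with coefficients $a_i=a_{n-i}$, then pairing the term at $i$ with the term at $n-i$ in $p_\alpha(-1)=\sum_{i=0}^n(-1)^i a_i$ gives contributions $a_i\bigl((-1)^i+(-1)^{n-i}\bigr)=0$, so $p_\alpha(-1)=0$; irreducibility would then force $p_\alpha(t)=t+1$, again contradicting $\alpha>1$. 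The argument is otherwise routine; the only point requiring a little care is the repeated use of irreducibility together with the hypothesis $\alpha>1$ to exclude the degenerate linear factors $t\pm 1$, since without that hypothesis a Salem-type polynomial could in principle acquire $\pm 1$ as a root.
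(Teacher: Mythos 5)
Your proof is correct and is built on the same central lemma as the paper's: a unit-circle conjugate $\beta$ of $\alpha$ satisfies $\bar\beta=\beta^{-1}$, so $\beta$ is a common root of $p_\alpha(t)$ and its reciprocal $p_\alpha^*(t)=t^np_\alpha(t^{-1})$, and since both are degree-$n$ polynomials with the irreducible $p_\alpha$ as minimal polynomial of $\beta$, they must be proportional, $p_\alpha^*=c\,p_\alpha$. The two arguments part ways only in how they pin down $c=1$ and the parity of $n$. The paper argues globally: from $p_\alpha^*=c\,p_\alpha$ it deduces that $\alpha^{-1}$ is a root, that the remaining roots pair off as reciprocal pairs on the unit circle, and hence that the constant term of $p_\alpha$ is $1$, which forces $c=1$ and (implicitly, via the pairing) that $n$ is even. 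You instead argue locally, by two evaluations: comparing leading and constant coefficients gives $c=a_0=\pm1$, evaluating at $t=1$ rules out $c=-1$ (since $p_\alpha(1)=0$ would contradict irreducibility together with $\alpha>1$), and evaluating a palindromic $p_\alpha$ at $t=-1$ rules out odd $n$ by the same mechanism. Your version has a small advantage in rigor: the paper's pairing argument tacitly assumes no root is its own reciprocal (i.e.\ equals $\pm1$), which your explicit appeal to irreducibility and $\alpha>1$ handles transparently, and you never need to account for the full multiset of roots.
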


\begin{proof}
Since $p_{\alpha}(t)\in \Z[t]$ has a root $\omega_0$ on the unit circle, then $\overline{\omega_0}=\omega_0^{-1}$ is also a root of $p_{\alpha}(t)$.
Hence, $t^np_{\alpha}(t^{-1})$ is also the minimal polynomial of $\omega_0$ and then there exists a constant $c\in \Z$ such that 
$$
p_{\alpha}(t)=ct^np_{\alpha}(t^{-1}).
$$ 
As a consequence, $\alpha^{-1}$ is also a root of $p_{\alpha}(t)$, so that any root $\omega$ of $p_{\alpha}(t)$ except $\alpha$ and $\alpha^{-1}$ is on the unit circle, where $\omega^{-1}$ is also a root of $p_{\alpha}(t)$.
Therefore the constant term of $p_{\alpha}(t)$ should be $1$, which implies that $c=1$, that is,  $p_{\alpha}(t)=t^np_{\alpha}(t^{-1})$.
We conclude that $p_{\alpha}(t)$ is a palindromic polynomial of even degree.
\end{proof}


\begin{exa} 
\label{Lehmer-p}
The growth rate of the cocompact hyperbolic Coxeter group described by the Coxeter graph in Example \ref{237} is a Salem number.
More precisely, the denominator polynomial 
\begin{equation*}
L(t)= t^{10}+t^9-t^7-t^6-t^5-t^4-t^3+t+1
\label{lehmer}
\end{equation*}
of  its growth function (\ref{237f}) is a Salem polynomial, and its positive root $\alpha_L\approx 1.17628$, which is a Salem number, is the growth rate. 
Note that $L(t)$ and $\alpha_L$ are known as the Lehmer polynomial and its Lehmer's number, respectively $($cf. \cite{L}$)$.
Figure \ref{Lehmer} describes the distribution of the roots of $L(t)$.
\begin{figure}[h]
\begin{center}
 \includegraphics [width=115pt, clip]{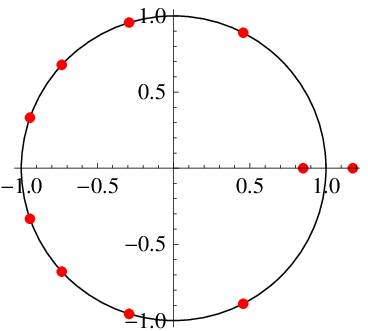}
\caption{}
\label{Lehmer}
\end{center}
\end{figure}
\end{exa}

 
As we see, the minimal polynomial of a Salem number has roots on the unit circle except for a  reciprocal pair of positive real roots. 
Focussing on the numbers of roots outside the unit circle and the existence of a root on the unit circle, a $j$-Salem number has been defined by Kerada \cite[Definition 2.1]{Ker}, which is a generalization of a Salem number. 
In this paper, we focus on a $2$-Salem number. 
Note that Samet \cite{Sam} also defined a set of algebraic integers corresponding to $2$-Salem numbers.
 \begin{defi} 
 \label{2Salem}
A $2$-Salem number is an algebraic integer $\alpha $ such that $|\alpha |>1$ and $\alpha$ has one conjugate root $\beta$ different from $\alpha$ satisfying $|\beta|>1$ while other conjugate roots $\omega$ satisfy $|\omega|\leq 1$ and at least one of them is on the unit circle.
Call the minimal polynomial $p_{\alpha}(t)$ of $\alpha$ a $2$-Salem polynomial.
\end{defi}

As in the case of a Salem polynomial, a $2$-Salem polynomial $p_{\alpha}(t)$ becomes a palindromic polynomial of even degree.
As a consequence, $\alpha^{-1}$ and $\beta^{-1}$ are roots of $p_{\alpha}(t)$, and all roots different from $\alpha,\alpha ^{-1}, \beta , \beta ^{-1}$ lie on the unit circle and are complex (cf. Figure \ref{2-Salem}).
So a $2$-Salem polynomial has even degree $n\geq 6$, and note that $\beta$ becomes also a $2$-Salem number.

In \cite{ZZ}, T. Zehrt and C. Zehrt found infinitely many cocompact Coxeter groups in $\H^4$ whose denominators $q(t)$ of the growth functions $f_S(t)$ have the following property: all the roots of the polynomial $q(t)$ are on the unit circle except exactly two pairs of real roots.    
We notice that if $q(t)$ is irreducible, it becomes a $2$-Salem polynomial. 
This motivated us to investigate whether $2$-Salem numbers appear as growth rates of such groups.
As a first observation, we get the following proposition.

\begin{prop}
The growth rate of the cocompact hyperbolic Coxeter group described by the Coxeter graph in Example \ref{zehrt}  is a $2$-Salem number.
\end{prop}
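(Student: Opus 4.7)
The plan is to analyze the denominator polynomial
\[
q(t)= t^{16}-4t^{15}+t^{14}+t^{12}+t^{11}+2t^9+2t^7+t^5+t^4+t^2-4t+1
\]
directly: verify its reciprocal structure, locate its roots, and establish irreducibility, so that its dominant real root (the growth rate) falls under Definition \ref{2Salem}.

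First, I would note by inspection of the coefficient sequence that $q(t)$ is palindromic of even degree $16$, so it satisfies $t^{16}q(t^{-1})=q(t)$. This immediately gives that the nonzero roots come in pairs $\{\alpha,\alpha^{-1}\}$, and any root of modulus $1$ comes with its complex conjugate. In particular, the growth rate $\tau = R^{-1}>1$ is a root of $q(t)$ together with its reciprocal $\tau^{-1}$.

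Next, I would locate the roots. Using the substitution $y=t+t^{-1}$, write $q(t)=t^8 Q(y)$ where $Q(y)\in\Z[y]$ has degree $8$. Real roots of $Q$ in the open interval $(-2,2)$ correspond to pairs of complex conjugate roots of $q(t)$ on the unit circle; real roots of $Q$ with $|y|>2$ correspond to reciprocal pairs of real roots of $q(t)$ off the unit circle. The goal is to show that $Q$ has exactly two real roots with $|y|>2$ (each contributing a reciprocal pair of positive real roots to $q$, one pair near $\tau$ and one near another real $\beta>1$) and six real roots in $(-2,2)$. This is exactly the setup for the Kempner-type criterion adapted to palindromic polynomials by T. and C. Zehrt; I would apply that criterion to $Q(y)$. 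Concretely, one evaluates $Q$ at $y=\pm 2$ and at a handful of intermediate points, and uses sign changes (together with a bound on $Q$'s derivative or a Sturm-sequence argument) to separate all eight real roots and count precisely how many lie outside $[-2,2]$.

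Then I would prove that $q(t)$ is irreducible over $\Z$. Because $q$ is monic with constant term $1$ and palindromic, any factorization would have to be into palindromic factors in $\Z[t]$ of even degree, i.e., $q=q_1 q_2$ with $\deg q_1, \deg q_2 \in\{2,4,6,8\}$. I would rule these out case by case: (i) a degree-$2$ palindromic factor $t^2+at+1$ would force $a\in\Z$ with $-2\leq a$ and such that $q(t)/(t^2+at+1)\in\Z[t]$, which can be ruled out by checking the resultant or by evaluating $q$ at the roots of $t^2+at+1$ for each admissible $a$; (ii) higher-degree palindromic factors would, in every admissible case, produce a root either on the unit circle that is a root of a cyclotomic polynomial (contradicting the fact that the cyclotomic factors are already accounted for in the numerator-compatible factorization of $q$) or a real root of height incompatible with the root-location data in the previous step. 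A reduction of $q(t)$ modulo a small prime (e.g., $p=3$ or $p=5$) can simplify this: if $q \bmod p$ has an irreducible factor of a degree that obstructs all admissible palindromic splittings, irreducibility follows at once.

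The main obstacle is this last step: verifying irreducibility of a specific degree-$16$ integer polynomial. The root-location and reciprocity claims are essentially mechanical once the Kempner-type criterion is in place, but irreducibility demands genuine arithmetic work. Once both are established, $q(t)$ is by definition a $2$-Salem polynomial, and its largest real root, which equals the growth rate $\tau$ of the Coxeter group of Example \ref{zehrt}, is a $2$-Salem number.
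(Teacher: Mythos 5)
Your overall structure (palindromic $\Rightarrow$ reciprocal root pairs, count roots on and off the unit circle, prove irreducibility, conclude $2$-Salem) matches the paper's, and your root-location step via a Kempner-type criterion in the variable $y=t+t^{-1}$ is essentially the content of the Zehrt--Zehrt result that the paper simply cites for this polynomial. The genuine divergence is in the irreducibility step, which you correctly flag as the main obstacle but leave unexecuted. You sketch a case-by-case elimination of palindromic factorizations of degrees $2,4,6,8$, possibly aided by reduction mod a small prime. The paper instead invokes Cohn's irreducibility criterion (via Murty): for monic $f(t)=t^m+a_{m-1}t^{m-1}+\cdots+a_0\in\Z[t]$ with $H=\max_i|a_i|$, if $f(n)$ is prime for some integer $n\geq H+2$ then $f$ is irreducible over $\Z$. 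Here $H=4$ and $D(186)$ is prime, so irreducibility follows from a single evaluation with no case analysis. Your approach is closer in spirit to what the paper is forced to do later for the infinite families $Q_{\ell,m,n}(t)$ (where a uniform Cohn-type argument is unavailable and one must rule out palindromic factors modulo $3$ case by case), but for this one fixed degree-$16$ polynomial it is substantially more work. One further caution: your assertion that any factorization of $q$ must split into \emph{palindromic} factors of even degree is not automatic from palindromicity of $q$ alone --- an irreducible factor could a priori be merely reciprocal to a different factor rather than self-reciprocal --- and needs the root-location data (all but four roots lie on the unit circle and are complex, and the four off-circle real roots form two reciprocal pairs with incompatible constant terms) to be justified; the paper's Cohn argument sidesteps this point entirely.
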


\begin{proof}
We prove that the denominator polynomial 
\begin{equation*}
D(t)=t^{16}-4 t^{15}+t^{14}+t^{12}+t^{11}+2 t^9+2 t^7+t^5+t^4+t^2-4 t+1
\end{equation*}
of the growth function (\ref{zehrtf}) is a $2$-Salem polynomial.
First,  by \cite[Theorem 2]{ZZ}, $D(t)$ has six pairs of complex roots on the unit circle and two pairs of positive roots $\alpha, \alpha ^{-1}$ and $\beta, \beta ^{-1}$, where $\alpha, \beta >1$ (see Figure \ref{2-Salem}). 

The irreducibility of $D(t)$ in $\Z[t]$ is proved by Cohn's theorem (see \cite[Theorem 1]{M}) as follows.
For a polynomial $f(t)=t^m+a_{m-1}t^{m-1}+\cdots +a_1t+a_0$ of degree $m$ in $\mathbb{Z}[t]$, set 
\begin{equation*}
H=\max_{0\leq i\leq m-1} |a_i|.
\label{H}
\end{equation*}
If $f(n)$ is prime for some integer $n\geq H+2$, then $f(t)$ is irreducible in $\Z[t]$ by Cohn's criterion.
In fact, $H=4$ for $D(t)$,  and $D(186)=20080678392852674723847588201\\73242349$ is a prime number.
So $D(t)$ is irreducible.
This completes the proof.
Furthermore, two positive roots $\alpha \approx 3.70422$ and $\beta \approx1.24202$ are $2$-Salem numbers.
\begin{figure}[h]
\begin{center}
 \includegraphics [width=230pt, clip]{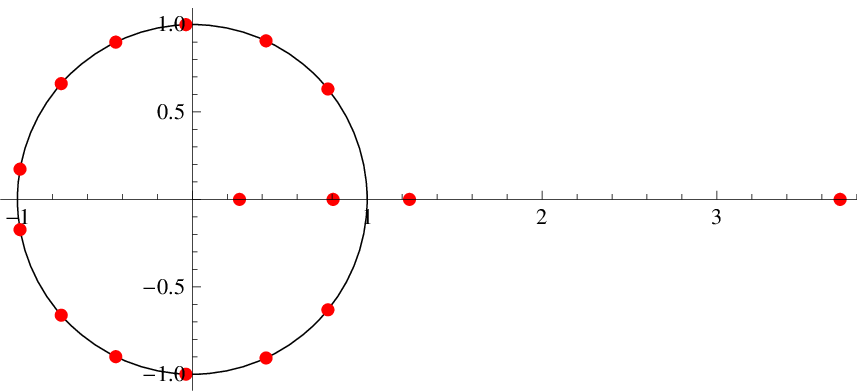}
\caption{}
\label{2-Salem}
\end{center}
\end{figure}
\end{proof}

In the next chapter, we construct families of infinitely many cocompact hyperbolic Coxeter groups in $\H^4$, which are different from those constructed by T. Zehrt and C. Zehrt, and prove that their growth rates are $2$-Salem numbers. 

\newpage
\section{Construction of a Coxeter domino $T$}
T. Zehrt and C. Zehrt \cite[Theorem 2]{ZZ} described the characteristic distribution of roots of the denominator polynomials of the growth functions of Coxeter garlands in $\H^4$ built by a particular compact truncated Coxeter $4$-simplex with two orthogonal facets.
In this paper, we have similar, but more detailed results for the denominator polynomials of the growth functions of Coxeter dominoes constructed by the Coxeter domino $T\subset \H^4$, which leads us to the connection with $2$-Salem numbers. 

To begin with, we are interested in the Coxeter system $(W,S)$ having the following Coxeter graph  $\Gamma$ (see Figure \ref{sc-graph}).
It was first described by Schlettwein in \cite{Sc} (unpublished).

\begin{figure}[h]
\begin{center}
 \includegraphics [width=50pt, clip]{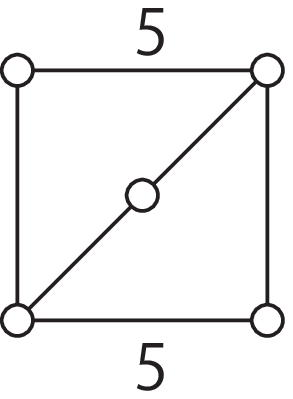}
\caption{}
\label{sc-graph}
\end{center}
\end{figure}
At first, notice that this graph describes an infinite Coxeter group since it does not belong to the well-known list of connected Coxeter graphs of finite Coxeter groups.
In \S4.1, we will explain in detail how to realize $(W,S)$ as a geometric Coxeter group in $\H^4$.

\subsection{The Coxeter simplex $P\subset \H^4$ with ultraideal vertices}
We consider the matrix $G$ related to the Coxeter graph $\Gamma$ in Figure \ref{sc-graph}, which is defined by $(g_{ij})=(-\cos\frac{\pi}{m_{ij}})$:
\begin{equation}
G=
\begin{pmatrix}
1&-\cos \frac{\pi}{5}&0&-\frac{1}{2}&0\\
-\cos \frac{\pi}{5}&1&-\frac{1}{2}&0&-\frac{1}{2}\\
0&-\frac{1}{2}&1&-\cos \frac{\pi}{5}&0\\
-\frac{1}{2}&0&-\cos \frac{\pi}{5}&1&-\frac{1}{2}\\
0&-\frac{1}{2}&0&-\frac{1}{2}&1
\end{pmatrix}.
\label{mat}
\end{equation}
The signature of $G$ equals $(4,1)$.
In fact, $\det G<0$ and the principal submatrix formed by the first three rows and columns is positive definite (and related to the Cartan matrix of  $H_3$). 
Therefore, $G$ has precisely one negative and four positive eigenvalues (see \cite{Sa}, for example).
By a result of Vinberg \cite[Theorem 2.1]{V}, an indecomposable symmetric real matrix $G=(g_{ij})$, with $g_{ii}=1$ and $g_{ij}\leq 0$ for $i\not=j$, is the Gram matrix of an acute-angled polytope of dimension $n$ in $\H^n$ (up to isometry) if the signature of $G$ equals $(n,1)$.
Therefore, $G$ is the Gram matrix of a Coxeter polytope $P\subset \H^4$ given by the Coxeter graph $\Gamma $.
However, $P$ is not compact since it is not in the list of Coxeter graphs of compact Coxeter simplices in $\H^4$ classified by Lann\'er \cite{La} (cf. \cite[p.141]{Hu}). 
In the sequel, we associate to $P$ a new compact polytope $T\subset \H^4$ by truncation.

Let $P=\cap _{i\in I} H_i^- \subset \H^4$ be the Coxeter polytope having a Coxeter graph $\Gamma$ as in Figure \ref{sc-graph}, where $I=\{1,...,5\}$.
Represent $H_i=e_i^L\cap \H^4$ as usually such that $G=(e_i\circ e_j)_{i,j\in I}$.
It follows that $P$ is an extended $4$-simplex in $\R^{4,1}$, bounded by five hyperplanes, and of infinite volume in $\H^4$.
In fact, all vertices of $P$ lie outside of $\H^4$.
To see this, let us use Vinberg's description of faces in terms of submatrices of $G$.
 
Observe that all principal submatrices of order four in $G$ are of signature $(3,1)$ and described by the Coxeter graphs in Figure \ref{graphABC} (cf. \cite[p.53]{La}).
Furthermore, all other principal submatrices of $G$, that is, those of order less than four, are positive definite and described by positive definite graphs, that is, by certain finite Coxeter groups. 
Then, by a result of Vinberg \cite[Theorem 3.1]{V}, all the faces of $P$ of positive dimension are hyperbolic polytopes while the vertices of $P$ do not belong to $\H^4$.
In fact, $\cap _{j\in I_i}H_j=\emptyset$, for $I_i:=I\setminus \{i\}$, $i=1,...,5$.
However,  the set $\cap _{j\in I_i} \widehat{H_j}$ is a one-dimensional subspace in $\R^{4,1}$ since $e_i, i\in I$, are linearly independent.
This line can be represented by a positive space-like unit vector $v_i \in \R^{4,1}$ such that $v_i$ is a vertex of $P$ outside $\H^4$ (see also (\ref{vertex})).
We call $v_i$ an {\em ultraideal vertex} of $P$.

\subsection{Construct the compact totally truncated Coxeter simplex $T \subset \H^4$ from $P$}
Now we will truncate all ultraideal vertices off from $P$ in order to obtain a compact Coxeter polytope $T$ as follows. 
The set $\cap _{j\in I_i}\widehat{H}^-_j$ is a $4$-dimensional simplicial cone in $\R^{4,1}$, whose apex $v_i$ is  outside of $\H^4$.
Moreover, the hyperplane $H_{v_i}:=v_i^L\cap \H^4$ intersects all of the hyperplanes $H_j$, $j\in I_i$, orthogonally, that is, 
\begin{equation}
\angle H_{v_i}^- H_j^-=\frac{\pi}{2},\, j\in I_i,
\label{ortho}
\end{equation}
 since $v_i\circ e_j=0$ for all $j\in I_i$ by definition of $v_i$, and by (\ref{cos}).
In this situation, by a result of Vinberg \cite[Proposition 4.4]{V}, $P\cap H_{v_i}^-$ is also a convex polytope in $\H^4$.
By performing this operation for each ultraideal vertex $v_i$, $i=1,...,5$, we get a new polytope
$$
T:=P\cap (\cap _{i\in I} H_{v_i}^-) \subset \H^4,
$$  
which we shall call {\em totally (orthogonally) truncated simplex} (see Figure \ref{bfaf-truncate}).
A facet $P\cap H_{v_i}$ is called an {\em orthogonal facet} of $T$.
\begin{figure}[h]
 \includegraphics [width=250pt, clip]{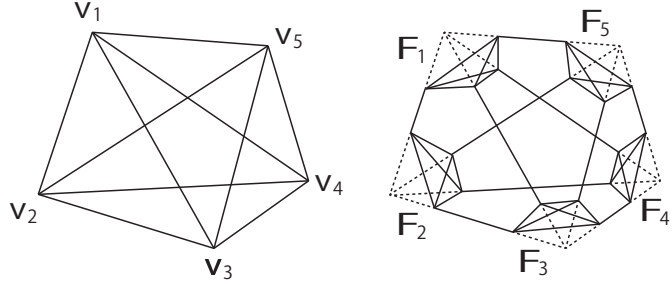}
\caption{The passage to the compact totally truncated Coxeter 4-simplex $T$ where each ultraideal vertex $v_i$ is replaced by the orthogonal facet $F_i$}
\label{bfaf-truncate}
\end{figure}

Let us describe the ultraideal vertices $v_1,...,v_5$ of $P$ in the following explicit way (cf. \cite{Lu}).
Let $cof_{ij}(G):=(-1)^{i+j}\det G_{ij}$ be the cofactor of $G$, where $G_{ij}$ is the submatrix obtained by removing the $i$-th row and $j$-th column of $G$.
Since the $ij$-th coefficient of $G^{-1}$ equals $\frac{1}{\det G}\, cof_{ji}(G) $, then by comparing the $ij$-th coefficient of $G^{-1}G=I$, 
$$
\frac{1}{\det G}\, \sum_{k=1}^{5} cof_{ik}(G) g_{kj}=\delta_{ij}
$$
and then
\begin{equation*}
\left(\sum_{k=1}^{5} cof_{ik}(G) e_k \right) \circ e_j=\delta_{ij}\det G.
\end{equation*}
If we set
\begin{equation*}
w_i:=\sum _{k=1}^{5} cof_{ik}(G)e_k,\, 1\leq i\leq 5,
\end{equation*}
then 
\begin{equation*}
w_i \circ w_j=cof_{ij}(G)\det G 
\begin{cases}
<0 \;\; \mbox{if $i\not=j$}\\
>0 \;\; \mbox{if $i=j$}.
\end{cases}
\end{equation*}
Hence, $\{w_1,...,w_5\}$ is a linearly independent set. 
Now, consider
\begin{equation}
v_i=\frac{w_i}{w_i \circ w_i}=\frac{w_i}{\sqrt{cof_{ii}(G)\det G}}.
\label{vertex}
\end{equation}
Then, for each $i$, 
\begin{equation}
\begin{split}
&v_i \circ v_i =1,\\
&v_i \circ e_j =0\; \mbox{for $j\not=i$},\\
&v_i \circ e_i =-\sqrt{\frac{\det G}{cof_{ii}(G)}}<0.
\end{split}
\label{value}
\end{equation}
So, for each $i$, $\{ v_j \,|\, j\in I_i\}$ spans $\widehat{H_i}$ since it is a linearly independent set, and $v_i$ spans $\cap _{j\in I_i} \widehat{H_j}$.
Hence $v_i$ is indeed an ultraideal vertex of $P$. 

By direct calculation, based on (\ref{mat}) and (\ref{value}), we get
$
v_i\circ e_i <-1
$
for each $i\in I$, which means that the orthogonal facet $H_{v_i}$ and the facet $H_i$ opposite to it are disjoint in $T$ \cite[\S 3]{R}:
\begin{equation}
H_{v_i}\cap H_i =\emptyset.
\label{i}
\end{equation}
Similarly,
$
v_i \circ v_j
=\frac{-cof_{ij}(G)}{\sqrt{cof_{ii}(G)cof_{jj}(G)}}
<-1
$
for each $j\in I_i$, which means that the orthogonal facets are mutually disjoint:
\begin{equation}
H_{v_i}\cap H_{v_j} =\emptyset  \;\mbox{if $i\not=j$}.
\label{ij}
\end{equation}
By combining (\ref{ortho}), (\ref{i}), and (\ref{ij}), it follows that $T$ is described by the Coxeter graph $\Gamma ^*$ in Figure \ref{5trun-4simp1} (see below).
In particular, $T$ is a Coxeter polytope.
\begin{figure}[h]
\begin{center}
 \includegraphics [width=150pt, clip]{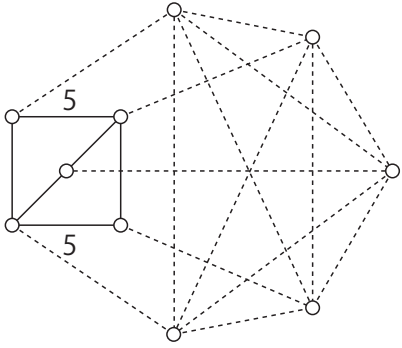}
\caption{The Coxeter graph $\Gamma^*$ of the compact totally truncated Coxeter 4-simplex $T$}
\label{5trun-4simp1}
\end{center}
\end{figure}

Let $I^*=\{1,...,10\}$ be the indexed set of ten facets of $T$.
For the compactness of $T$, observe first that all order four subgraphs $\Gamma(4)$ of $\Gamma$ are of signature $(3,1)$, and each subgraph $\Gamma(2)$ of type  \includegraphics [width=30pt, clip]{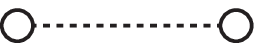} in $\Gamma ^*$ is of signature $(1,1)$. 
Next, let $J\subset I^*$ be a subset corresponding to one of the subgraphs $\Gamma (j), j=2,4$, above.
Consider $J'\subset I^*$ indexing all nodes in $\Gamma^*$ which are {\em not} connected to nodes in $J$.
Form $N(J):=J\cup J'$. 
By applying Vinberg's criterion \cite[Theorem 4.1]{V}, $T$ is compact because it is easily checked that $\cap _{i\in N(J)}\widehat{H_i}=\{0\}$.

Since $T$ will be the building block for new Coxeter polytopes (see \S \ref{g-f-domino}), we call $T$ a {\em Coxeter domino}.

\section{Main theorems}
\subsection{Growth functions of Coxeter dominoes}
\label{g-f-domino}
As we  see, $T$ has five orthogonal facets and each of them is one of the three kinds of Coxeter $3$-simplices in $\H^3$ described by the Coxeter graphs in Figure \ref{graphABC}.
More precisely, $T$ has two orthogonal facets of type $A$ and $B$, and one orthogonal facet of type $C$.

\begin{figure}[h]
\begin{center}
 \includegraphics [width=180pt, clip]{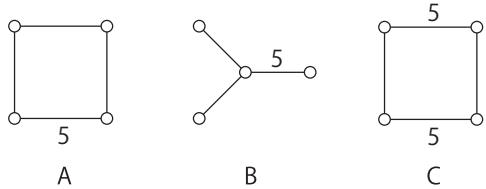}
\caption{The three Coxeter graphs of the hyperbolic Coxeter simplex of dimension $3$ which describe the figures of three types of orthogonal facets of $T$}
\label{graphABC}
\end{center}
\end{figure}

By gluing copies of $T$ along the orthogonal facets of the same type, we obtain a new polytope which is again a Coxeter polytope.
By gluing over and over again, we obtain infinitely many Coxeter polytopes in $\H^4$.  
This construction is established by Makarov \cite{Ma} and is also explained in \cite[Chapter II, \S 5]{V}.
Vinberg uses the term ``garlands" (cf. \cite[p.62]{V}) for the resulting Coxeter polytopes obtained by gluing together the truncated Coxeter simplices having two orthogonal facets (see also \cite{ZZ}).

Let us consider the growth functions of the Coxeter groups with respect to the Coxeter dominoes built by $T$.
At first, the growth function $W(t)$ of the Coxeter domino $T$ itself is calculated by using the formula (\ref{solo}) and (\ref{st}):
\begin{equation}
\begin{split}
\frac{1}{W(t^{-1})}
&=\frac{1}{W(t)}\\
&=1-\frac{10}{[2]}+\left\{ \frac{2}{[2,5]}+\frac{4}{[2,3]}+\frac{24}{[2,2]}\right\}\\
&-\left\{ \frac{6}{[2,6,10]}+\frac{3}{[2,3,4]}+\frac{6}{[2,2,5]}+\frac{12}{[2,2,3]}+\frac{13}{[2,2,2]}\right\}\\
&+\left\{ \frac{12}{[2,2,6,10]}+\frac{6}{[2,2,3,4]}+\frac{2}{[2,2,2,2]}\right\},
\end{split}
\label{W(t)}
\end{equation}
where the first equality of (\ref{W(t)}) comes from \cite {Se} (and \cite[Corollary]{CD}) since $T$ is compact.
And then
$$
W(t)=\frac{P(t)}{Q(t)},
$$
where 
\begin{equation}
\begin{split}
P(t)
&=[2,4,6,10]\\
&=\Phi _2(t)^4 \Phi _3(t) \Phi _4(t) \Phi _5(t) \Phi _6(t) \Phi _{10}(t),
\end{split}
\label{P(t)}
\end{equation}
and
\begin{equation}
\begin{split}
Q(t)&=t^{18}-6 t^{17}+3 t^{16}-5 t^{15}+5 t^{14}-t^{13}+9 t^{12}+11 t^{10}-2 t^9+11 t^8\\
&+9 t^6-t^5+5 t^4-5 t^3+3 t^2-6 t+1,
\end{split}
\label{Q(t)}
\end{equation}
which is a palindromic polynomial of degree $18$.
Here $\Phi _i(t)$ is the $i$-th cyclotomic polynomial.
To compute the growth functions for Coxeter dominoes, we use the following key formula.

\begin{prop}\cite[Corollary 2]{ZZ}
\label{z-form-pro}
Consider two Coxeter $n$-polytopes $P_1$ and $P_2$ having the same orthogonal facet of type $F$ which is a Coxeter $(n-1)$-polytope, and their growth functions $W_1(t)$, $W_2(t)$ and $F(t)$ respectively.
Then the growth function $(W_1*_FW_2)(t)$ of the Coxeter polytope obtained by gluing $P_1$ and $P_2$ along $F$ is given by
\begin{equation*}
\frac{1}{(W_1*_FW_2)(t)}=\frac{1}{W_1(t)}+\frac{1}{W_2(t)}+\frac{t-1}{t+1}\frac{1}{F(t)}.
\label{z-form}
\end{equation*}
\end{prop}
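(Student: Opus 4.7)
The plan is to apply Steinberg's formula (\ref{st}) to the four Coxeter systems $W_1, W_2, W_F,$ and $W := W(P_1 *_F P_2)$, and then combine the resulting identities via inclusion--exclusion. The two key geometric inputs will be (a) an identification of certain generators of $W$ forced by the orthogonality of $F$, and (b) a combinatorial restriction on the finite-type subsets of the glued system.

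For (a): since $F$ is an orthogonal facet of each $P_i$, every facet of $P_i$ adjacent to $F$ is perpendicular to $F$; and in $\H^n$ the hyperplane through a prescribed $(n-2)$-subspace of $F$ that is orthogonal to $F$ is unique. Hence two facets adjacent to $F$ (one from $P_1$ and one from $P_2$) that meet $F$ in the same facet of $F$ extend to the same hyperplane of $\H^n$ and give a single facet of $P_1 *_F P_2$. Writing $\bar S_i := S_i\setminus\{s_F\}$ and $S_F \subset \bar S_1\cap \bar S_2$ for the common set of generators corresponding to facets adjacent to $F$, the generating set of $W$ is $\bar S=\bar S_1\cup \bar S_2$ with $\bar S_1\cap \bar S_2 = S_F$. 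For (b): I claim that if $T\subset \bar S$ and $W_T$ is finite, then $T\subset \bar S_1$ or $T\subset \bar S_2$. Indeed, if $s_1 \in T\cap(\bar S_1\setminus S_F)$ and $s_2 \in T\cap(\bar S_2\setminus S_F)$, then $H_{s_1}$ and $H_{s_2}$ are both disjoint from $F$, and since each bounds $P_i$ which lies on its own side of $F$, the two hyperplanes occupy opposite open half-spaces of $F$ and are mutually disjoint. This forces $m_{s_1 s_2}=\infty$, so $W_T$ contains an infinite dihedral subgroup, contradicting finiteness.

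Given (a) and (b), Steinberg applied to $W$ decomposes via inclusion--exclusion as
\[
\frac{1}{W(t^{-1})} \;=\; \Sigma_1 + \Sigma_2 - \Sigma_F,
\qquad
\Sigma_i := \sum_{T \subset \bar S_i,\, W_T\ \mathrm{fin}} \frac{(-1)^{|T|}}{W_T(t)},
\qquad
\Sigma_F := \sum_{T \subset S_F,\, W_T\ \mathrm{fin}} \frac{(-1)^{|T|}}{W_T(t)}.
\]
Since the relations among $S_F$ in $W$ coincide with those of $W_F$ (the corresponding hyperplanes being orthogonal to $F$), Steinberg for $W_F$ gives $\Sigma_F = 1/F(t^{-1})$. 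To evaluate $\Sigma_i$, I would apply Steinberg to $W_i$ and split over whether $s_F \in T$: any $T \ni s_F$ with $W_T$ finite must satisfy $T\setminus\{s_F\} \subset S_F$ (since $m_{s_F,s}=\infty$ for $s$ not adjacent to $F$), and orthogonality yields $W_T = W_{T\setminus\{s_F\}} \times \langle s_F\rangle$, contributing a factor $(1+t)$ in the growth polynomial. This gives $\Sigma_i = 1/W_i(t^{-1}) + \frac{1}{1+t}\cdot 1/F(t^{-1})$. Substituting back produces $\frac{1}{W(t^{-1})} = \frac{1}{W_1(t^{-1})} + \frac{1}{W_2(t^{-1})} + \frac{1-t}{1+t}\cdot \frac{1}{F(t^{-1})}$, and the substitution $t \mapsto t^{-1}$ (noting $\frac{t^{-1}-1}{t^{-1}+1}=\frac{1-t}{1+t}$) converts this into the stated identity.

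The main obstacle is the interplay of (a) and (b): both are delicate consequences of the orthogonality hypothesis for hyperplanes meeting $F$ in $\H^n$, and require genuine hyperbolic-geometric reasoning rather than purely combinatorial input. Once these are in hand, the remainder is a bookkeeping computation with Steinberg's formula.
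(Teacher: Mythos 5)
The paper does not prove this proposition; it is imported verbatim as \cite[Corollary 2]{ZZ}, so there is no internal proof to compare against. Your argument is correct and, as far as I can tell, follows the same Steinberg-plus-inclusion--exclusion route that Zehrt--Zehrt use. The two geometric inputs you isolate are exactly the right ones: (a) that the orthogonality of all facets meeting $F$ forces each pair of adjacent facets from $P_1$ and $P_2$ to merge into a single hyperplane, so that $\bar S_1\cap\bar S_2=S_F$ and the relations among $S_F$ in $W$, $W_1$, $W_2$, $W_F$ all coincide; and (b) that a finite parabolic of the glued system cannot straddle both $\bar S_1\setminus S_F$ and $\bar S_2\setminus S_F$. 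For (b), the step ``$H_{s_1}$ is disjoint from $\widehat F$'' is where the acute-angledness (not merely orthogonality) of $P_1$ enters: by Vinberg's criterion the $2\times 2$ principal submatrix $\bigl(\begin{smallmatrix}1 & g\\ g & 1\end{smallmatrix}\bigr)$ with $g=e_{s_1}\circ e_F\in(-1,0]$ would be positive definite, forcing $H_{s_1}$ and $F$ to share a codimension-two face, contradicting non-adjacency; hence $e_{s_1}\circ e_F\le -1$ and the hyperplanes are ultraparallel, so $\widehat H_{s_1}$ lies strictly in the open half-space of $\widehat F$ containing $P_1$. With that filled in, your decomposition $\tfrac{1}{W(t^{-1})}=\Sigma_1+\Sigma_2-\Sigma_F$, the identification $\Sigma_F=1/F(t^{-1})$, the evaluation $\Sigma_i=1/W_i(t^{-1})+\tfrac{1}{1+t}\cdot 1/F(t^{-1})$ via the direct-product splitting $W_T\cong W_{T\setminus\{s_F\}}\times\langle s_F\rangle$, and the final substitution $t\mapsto t^{-1}$ are all sound. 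One small caveat to make explicit: the inclusion--exclusion uses $\bar S_1\cap\bar S_2=S_F$ exactly (no accidental coincidence of non-adjacent hyperplanes of $P_1$ and $P_2$); this also follows from the same ultraparallelism argument, since such hyperplanes live in opposite open half-spaces of $\widehat F$ and so cannot coincide.
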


Now we have the formula for the growth functions of the Coxeter dominoes built by $T$ as follows.
\begin{cor}
Consider $n+1$ copies of $T$, and obtain from them one Coxeter polytope by $n$-times gluing. 
If this polytope is obtained by $\ell$-times gluing along the orthogonal facet of type $A$,  $m$-times along $B$ and $(n-\ell-m)$-times along $C$, where $\ell+m \leq n$ (cf. Figure \ref{domino2}), then $n-\ell-m\leq (n+1)/2$ if $n$ is odd, while $n-\ell-m\leq n/2$ if $n$ is even, and
its growth function $W_{\ell,m,n}(t)$ is given by
\begin{equation}
\frac{1}{W_{\ell,m,n}(t)}=\frac{n+1}{W(t)}+\frac{t-1}{t+1}\left(\frac{\ell}{A(t)}+\frac{m}{B(t)}+\frac{n-\ell-m}{C(t)}\right).
\label{lmn}
\end{equation}
\begin{figure}[h]
\begin{center}
 \includegraphics [width=210pt, clip]{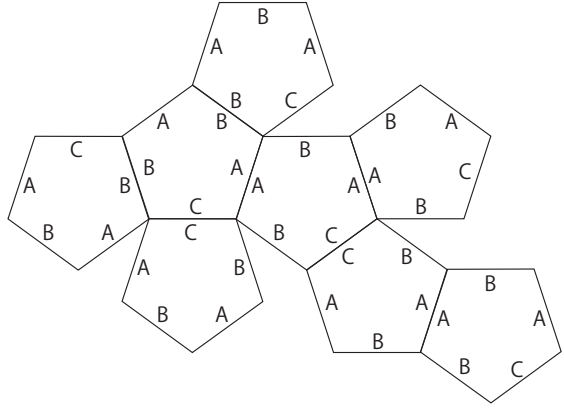}
\caption{The image of the Coxeter polytope obtained by $7$-times gluing of $T$ satisfying $(\ell, m, n)=(3,2,7)$}
\label{domino2}
\end{center}
\end{figure}
\label{garl-formu1}
\end{cor}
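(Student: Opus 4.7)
The plan is to prove both assertions by induction on $n$, the number of gluings, with Proposition \ref{z-form-pro} (the Zehrt--Zehrt gluing identity) as the main engine.

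First, for the combinatorial bound on $n-\ell-m$: reading off the Coxeter graph $\Gamma^*$ of $T$, one sees that $T$ possesses exactly one orthogonal facet of type $C$ (while it carries two each of types $A$ and $B$). Hence any copy of $T$ can participate in at most one $C$-gluing, so the copies involved in $C$-gluings are partitioned into disjoint pairs. Writing $c := n-\ell-m$, this forces $2c \le n+1$, so $c \le \lfloor (n+1)/2 \rfloor$, which is $n/2$ when $n$ is even and $(n+1)/2$ when $n$ is odd.

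For the growth function formula, the base case $n=0$ forces $\ell=m=0$ and the statement reduces to $W_{0,0,0}(t)=W(t)$, which holds trivially. For the inductive step, assume that (\ref{lmn}) is valid for any Coxeter polytope $P_{n-1}$ assembled from $n$ copies of $T$ by $n-1$ gluings with profile $(\ell',m',n-1-\ell'-m')$. Attach one further copy of $T$ to $P_{n-1}$ across a free orthogonal facet of type $X\in\{A,B,C\}$; such a matching facet exists whenever the combinatorial bound is respected, and the two facets to be identified are isometric because all type-$X$ orthogonal facets are determined up to isometry by the Coxeter subgraph they carry. Proposition~\ref{z-form-pro} then yields
$$\frac{1}{W_{\mathrm{new}}(t)} \;=\; \frac{1}{W_{\ell',m',n-1}(t)} \,+\, \frac{1}{W(t)} \,+\, \frac{t-1}{t+1}\cdot\frac{1}{X(t)}.$$
Expanding the first term by the inductive hypothesis upgrades the coefficient $n/W(t)$ to $(n+1)/W(t)$, while the extra $\tfrac{t-1}{t+1}\cdot\tfrac{1}{X(t)}$ increments one of $\ell$, $m$, or $n-\ell-m$ by one, matching (\ref{lmn}) for the new triple.

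The delicate point, to be carried implicitly, is that the formula depends only on the triple $(\ell,m,n)$ and not on the combinatorial tree of gluings along which $P$ was assembled; this commutativity is automatic because the Zehrt--Zehrt identity is additive in reciprocals of growth functions, so the contributions simply accumulate independently of order. A second silent geometric ingredient is that iterated gluings along orthogonal facets stay inside the category of compact hyperbolic Coxeter polytopes — this is Makarov's construction as presented in Vinberg \cite{V} — so Proposition~\ref{z-form-pro} remains applicable at every stage of the induction. Neither of these is an obstacle so much as a hypothesis to be quoted, which is why the argument is essentially bookkeeping on top of Proposition \ref{z-form-pro}.
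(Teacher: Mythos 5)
Your proof is correct, and it supplies exactly the argument the paper leaves implicit (the corollary is stated without proof as a direct consequence of Proposition \ref{z-form-pro}). The counting argument for $n-\ell-m$ via the single type-$C$ facet of $T$, and the induction on the number of gluings using additivity of the Zehrt--Zehrt identity in reciprocals, are the intended steps; the remark after the corollary in the paper (``$W_{\ell,m,n}(t)$ only depends on $(\ell,m,n)$ and does not directly depend on the resulting polytope'') confirms that the order-independence you flag is the point the authors had in mind. One small thing worth making explicit: the inductive step should be phrased as a decomposition of an arbitrary assembled polytope $P_n$ into $P_{n-1}$ and a leaf copy of $T$ (any tree on $n+1$ vertices and $n$ edges has a leaf), rather than as a forward construction, since the claim quantifies over all polytopes with the given gluing profile, not merely those you choose to build.
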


Note that $W_{\ell, m,n}(t)$ only depends on $(\ell,m,n)$ and does not directly depend on the resulting polytope.

\begin{theo}
The growth function $W_{\ell,m,n}(t)$ is the rational function $\displaystyle \frac{P_{\ell,m,n}(t)}{Q_{\ell,m,n}(t)}$, where
\begin{equation}
\begin{split}
P_{\ell,m,n}(t)
&=[2,4,6,10]\\
&=\Phi _2(t)^4 \Phi _3(t) \Phi _4(t) \Phi _5(t) \Phi _{6}(t) \Phi _{10}(t)
\end{split}
\label{Plmn}
\end{equation} and
\begin{equation}
\begin{split}
Q_{\ell,m,n}(t)&=t^{18}-(4n+6)t^{17}+(2n-m+3)t^{16}-(3n-m+\ell+5)t^{15}\\
&+(5n-3m+5)t^{14}-(n-4m+1)t^{13}+(8n-4m+\ell+9)t^{12}\\
&+(5m-\ell)t^{11}+(10n-5m+\ell+11)t^{10}-(2n-6m+2)t^9\\
&+(10n-5m+\ell+11)t^{8}+(5m-\ell)t^{7}+(8n-4m+\ell+9)t^{6}\\
&-(n-4m+1)t^{5}+(5n-3m+5)t^{4}-(3n-m+\ell+5)t^{3}\\
&+(2n-m+3)t^{2}-(4n+6)t+1
.
\end{split}
\label{Qlmn}
\end{equation}

Furthermore, $P_{\ell,m,n}(t)$ and $Q_{\ell,m,n}(t)$ are relatively prime.
\end{theo}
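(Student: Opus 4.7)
The plan is a direct calculation from the gluing formula (\ref{lmn}): combine the four rational terms on the right-hand side over a common denominator, simplify, and match the result with (\ref{Plmn}) and (\ref{Qlmn}).

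First, I would determine the growth functions $A(t)$, $B(t)$, $C(t)$ of the three orthogonal facet types. Each orthogonal facet $F_i = P \cap H_{v_i}$ is a compact Coxeter $3$-simplex whose Coxeter graph is the induced subgraph of $\Gamma$ on the node set $I \setminus \{i\}$, since $H_{v_i}$ meets each $H_j$, $j \neq i$, orthogonally and the dihedral angles between those $H_j$'s are inherited from $P$. Inspection of $\Gamma$ (Figure \ref{sc-graph}) reveals three isomorphism types among the five induced four-node subgraphs, corresponding precisely to types $A$, $B$, $C$ of Figure \ref{graphABC}. Applying Solomon's and Steinberg's formulas (\ref{solo})--(\ref{st}) to each then yields $F(t) = P_F(t)/Q_F(t)$ with $P_F(t)$ a product of cyclotomic polynomials; direct comparison shows that each $P_F(t)$ divides $P(t) = [2,4,6,10]$.

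Second, I would clear denominators in (\ref{lmn}) by multiplying through by $(t+1)P(t)$. Since each $P_F$ divides $P$, the expression $R_F(t) := P(t)Q_F(t)/P_F(t)$ is a polynomial in $\Z[t]$ for $F \in \{A,B,C\}$, and
\[
\frac{(t+1)P(t)}{W_{\ell,m,n}(t)} = (n+1)(t+1)Q(t) + (t-1)\bigl[\ell R_A(t) + m R_B(t) + (n-\ell-m)R_C(t)\bigr].
\]
Verification that $(t+1)$ divides this right-hand side (which follows from evaluation at $t = -1$, using the reflection symmetry of Coxeter growth functions) lets us cancel the extra $(t+1)$ factor, producing $1/W_{\ell,m,n}(t) = \widetilde{Q}_{\ell,m,n}(t)/P(t)$ for an explicit polynomial $\widetilde{Q}_{\ell,m,n}(t)$ of degree $18$ which is visibly affine in $(\ell,m,n)$. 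Matching the nineteen coefficients against (\ref{Qlmn}) is then a mechanical check that can be reduced to computing four well-chosen base cases and propagating by affinity; moreover the palindromicity of $Q(t)$ and of each $Q_F(t)$ forces $\widetilde{Q}_{\ell,m,n}(t)$ to be palindromic, which halves the work and provides a consistency check.

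Finally, for the coprimality of $P_{\ell,m,n}(t)$ and $Q_{\ell,m,n}(t)$, note that every root of $P_{\ell,m,n}(t) = [2,4,6,10]$ is a primitive $k$-th root of unity for some $k \in \{2,3,4,5,6,10\}$. For each such root $\zeta$, the value $Q_{\ell,m,n}(\zeta)$ is an affine function of $(\ell,m,n)$ with coefficients read off directly from (\ref{Qlmn}); a short case-by-case check, using $\ell, m, n - \ell - m \geq 0$ and $n \geq 1$, suffices to show $Q_{\ell,m,n}(\zeta) \neq 0$ for every such $\zeta$ and every admissible triple. The main obstacle is the bookkeeping in the combination step: eighteen coefficients, each an affine function of three parameters, must be tracked through the multiplications by $P(t)/P_F(t)$ and by $(t \pm 1)$ for three facet types. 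No single manipulation is conceptually difficult, but without exploiting palindromic symmetry the arithmetic is error-prone.
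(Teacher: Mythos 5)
Your proposal follows essentially the same route as the paper: compute the facet growth functions $A(t)$, $B(t)$, $C(t)$ from the induced subgraphs of $\Gamma$ via Solomon's and Steinberg's formulas, substitute these together with $W(t)=P(t)/Q(t)$ into the gluing formula (\ref{lmn}), clear denominators to read off $P_{\ell,m,n}$ and $Q_{\ell,m,n}$, and verify coprimality by checking that no cyclotomic factor of $P_{\ell,m,n}$ divides $Q_{\ell,m,n}$. The paper states these steps more tersely, but the underlying computation is the same, and your extra bookkeeping (the $(t+1)$ clearing factor, the affinity in $(\ell,m,n)$, palindromicity as a check) is sound.
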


\begin{proof}
The growth function $W(t)=P(t)/Q(t)$ of $T$ is already given by (\ref{P(t)}) and (\ref{Q(t)}).
The growth functions $A(t)$, $B(t)$, and $C(t)$ of the orthogonal facets of types $A$, $B$, and $C$ in Figure \ref{graphABC} are calculated by using the formula (\ref{solo}) and (\ref{st}) as follows (cf. \cite{W});
\begin{equation*}
\begin{split}
A(t)
&=-\frac{(t+1)^3 (t^2+1) (t^2-t+1) (t^4-t^3+t^2-t+1)}{(t-1) (t^{10}-2 t^9+t^8-2 t^6+2 t^5-2 t^4+t^2-2 t+1)},\\
\end{split}
\end{equation*}
\begin{equation*}
\begin{split}
B(t)
&=-\frac{(t+1)^3 (t^2+1) (t^2-t+1) (t^4-t^3+t^2-t+1)}{(t-1) (t^{10}-2 t^9+2 t^8-2 t^7+t^6-t^5+t^4-2 t^3+2 t^2-2 t+1)},
\end{split}
\end{equation*}
\begin{equation*}
\begin{split}
C(t)
&=-\frac{(t+1)^3 (t^2-t+1) (t^4-t^3+t^2-t+1)}{(t-1) (t^2+1) (t^6-2 t^5-t^4+3 t^3-t^2-2 t+1)}.
\end{split}
\end{equation*}
Then by (\ref{lmn}) in Corollary \ref{garl-formu1}, (\ref{Plmn}) and (\ref{Qlmn}) are obtained.

It is easily seen that $Q_{\ell,m,n}(t)$ is not divided by the cyclotomic polynomials $\Phi _2(t)$,  $\Phi _3(t)$,  $\Phi _4(t)$,  $\Phi _5(t)$,  $\Phi _6(t)$, and $\Phi _{10}(t)$.
\end{proof}

\subsection{Growth rates of Coxeter dominoes and $2$-Salem numbers}
Now, we will show that there are infinitely many $2$-Salem numbers as growth rates of $W_{\ell,m,n}(t)$. 
Recall that $\ell,m,n\in \N \cup \{0\}$ satisfy the inequalities
\begin{equation}
\left\{
\begin{array}{l}
\ell+m\leq n\\ 
n-\ell-m\leq (n+1)/2  \;\;\; \mbox{if $n$ is odd}\\
n-\ell-m\leq n/2 \;\;\; \mbox{if $n$ is even} 
\end{array}
\right.
\label{ine}
\end{equation}
as in Corollary \ref{garl-formu1}.
The next proposition is an adaption of Kempner's result \cite{Kem} to a palindromic polynomial in $\Z[t]$ to investigate its number of positive real roots and roots on the unit circle. 
It is due to T. Zehrt and C. Zehrt.

\begin{prop}$($\cite[Proposition 1 and Corollary 1]{ZZ}$)$
Let $f \in \mathbb{Z}[t]$ be a palindromic polynomial of even degree $n \geq 2$ with $f(\pm1) \not = 0$ and let
$$
g(t)=(t-i)^n f\left( \frac{t+i}{t-i} \right) = (t+i)^n f\left( \frac{t-i}{t+i} \right).
$$
Then $g(t)$ is a polynomial in $\mathbb{Z}[t]$ of degree $n$ and an even function.
Furthermore, if we consider $g(t)$ as a function of $u=t^2$, then the roots of $f(t)$ and $g(u)$ are related as follows. 
\begin{enumerate}
 \item $f(t)$ has $2k$ roots on the unit circle if and only if $g(u)$ has $k$ positive real roots.
 \item $f(t)$ has $2\ell$ real roots if and only if $g(u)$ has $\ell$ negative real roots.
\end{enumerate}
\label{root1} 
\end{prop}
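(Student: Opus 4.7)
The plan is to interpret the substitution $\psi: z\mapsto t=i(z+1)/(z-1)$, with inverse $\psi^{-1}(t)=(t+i)/(t-i)$, as the Cayley transform, which maps the unit circle (minus $1$) bijectively onto the real axis and maps the real axis (minus $-1$) onto the imaginary axis. The definition of $g$ is essentially an algebraic encoding of this substitution, so both the structural claims about $g$ and the root correspondence will follow from careful tracking of $\psi$ together with the palindromic symmetry of $f$.

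First I would verify that $g\in\mathbb{Z}[t]$ has degree $n$ and is even. Writing $f(z)=\sum_{j=0}^n a_j z^j$ with $a_j=a_{n-j}$, expansion yields $g(t)=\sum_{j=0}^n a_j (t+i)^j (t-i)^{n-j}$, an element of $\mathbb{Z}[i][t]$ of degree at most $n$. Conjugating the coefficients sends $i\mapsto -i$ and swaps $(t+i)^j(t-i)^{n-j}$ with $(t-i)^j(t+i)^{n-j}$; the palindromic identity reindexes the sum back to itself, so $\overline{g}=g$ and hence $g\in\mathbb{Z}[t]$. The leading coefficient is $\sum_j a_j=f(1)\neq 0$, fixing $\deg g=n$. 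Evenness follows from $t\mapsto -t$: each factor $(t\pm i)^k$ picks up $(-1)^k$, so the overall sign is $(-1)^n=1$, and the resulting swap of the two binomial factors is again absorbed by palindromicity.

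Next I would establish the root correspondence. Since $f$ is monic palindromic with $f(\pm 1)\neq 0$, we also have $f(0)=a_n=1\neq 0$, so the roots of $f$ avoid $\{0,\pm 1\}$; translating through $\psi$, the roots of $g$ avoid $\{0,\pm i\}$ (one can confirm this directly by computing $g(\pm i)=\pm 2^n\neq 0$). Off these exceptional points $\psi$ is a M\"obius bijection, and the identity $g(t)=(t-i)^n f(\psi^{-1}(t))$ shows the zeros of $g$ are precisely the $\psi$-images of the zeros of $f$. In particular, zeros of $f$ on the unit circle correspond to real zeros of $g$, and real zeros of $f$ correspond to purely imaginary zeros of $g$.

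Finally, for the exact counts in (1) and (2), I would match up the two natural involutions: palindromicity makes $z\mapsto 1/z$ a root-preserving involution on $f$, and under $\psi$ this becomes $t\mapsto -t$, precisely the symmetry that makes $g$ even. Thus $2k$ roots of $f$ on the unit circle form $k$ pairs $\{z,\overline{z}\}$, matching $k$ pairs $\{t,-t\}$ of real roots of $g$, which in turn correspond to $k$ positive roots of the polynomial $h$ defined by $g(t)=h(t^2)$; the same argument applied to real roots of $f$ and purely imaginary roots of $g$ gives (2). The only real obstacle is purely organizational---making sure the exceptional points $\{0,\pm 1,\pm i,\infty\}$ are all treated correctly---and this is exactly what the hypotheses $f(\pm 1)\neq 0$ and monicity of $f$ together provide.
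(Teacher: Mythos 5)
The paper does not prove this proposition: it is quoted with attribution to \cite[Proposition 1 and Corollary 1]{ZZ}, so there is no in-paper argument to compare against. Your proof via the Cayley transform is the natural one for this result, and it is essentially correct. One small inaccuracy: you twice appeal to monicity of $f$ (for ``$f(0)=a_n=1$'' and for ``$g(\pm i)=\pm 2^n$''), but the hypotheses do not say $f$ is monic. Fortunately neither step actually needs it: palindromicity together with $\deg f=n$ gives $f(0)=a_0=a_n\neq 0$, and from the explicit expansion $g(t)=\sum_{j}a_j(t+i)^j(t-i)^{n-j}$ one reads off $g(\pm i)=a_n(\pm 2i)^n\neq 0$. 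With that repaired, the argument stands: the M\"obius bijection $\psi$ carries the multiset of roots of $f$ (all avoiding $\{0,\pm 1,\infty\}$, the last because the leading coefficient of $g$ is $f(1)\neq 0$) bijectively onto the multiset of roots of $g$ (avoiding $\{0,\pm i,\infty\}$), sending the unit circle to the real axis and the real axis to the imaginary axis; and the evenness of $g$ is precisely the $\psi$-transport of the palindromic involution $z\mapsto 1/z$, so real roots of $g$ pair as $\pm t$ giving positive roots of $g(u)$, and purely imaginary roots pair as $\pm is$ giving negative roots, with all multiplicities tracking through. The ``if and only if'' in (1) and (2) is immediate from the bijectivity of $\psi$, as you implicitly use.
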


Applying Proposition \ref{root1}, we have the following result about the roots of $Q_{\ell, m,n}(t)$. 

\begin{theo}
\label{poles}
\begin{enumerate}
 \item $Q_{\ell,m,n}(t)$ has exactly seven pairs of complex roots on the unit circle and exactly two pairs of real roots.
 \item The two pairs of real roots $(\alpha _{\ell,m,n}, \frac{1}{\alpha _{\ell,m,n}})$ and $(\beta _{\ell,m,n}, \frac{1}{\beta_{\ell,m,n}})$ satisfy
 $$
0<\frac{1}{\alpha _{\ell,m,n}}<\frac{1}{\beta _{\ell,m,n}}<1<\beta _{\ell,m,n}<\alpha _{\ell,m,n}=\tau_{\ell,m,n},
 $$ 
where $\tau_{\ell,m,n}$ is the growth rate.
Furthermore, the sequence $\{ \tau _{\ell,m,n}\}$ converges to $\infty $ as $n \rightarrow \infty$.
 \item $Q_{\ell, m, n}(t)$ does not have a quadratic factor in $\Z[t]$.
\label{case1}
\end{enumerate}
\end{theo}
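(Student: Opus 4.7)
The plan is to apply Proposition \ref{root1} to $Q_{\ell,m,n}(t)$, which is palindromic of even degree $18$. The hypothesis $Q_{\ell,m,n}(\pm 1)\ne 0$ is verified as follows: a direct summation from (\ref{Qlmn}) yields $Q_{\ell,m,n}(1) = 32(n+1)\ne 0$, while $Q_{\ell,m,n}(-1)\ne 0$ follows because $(t+1)\mid P_{\ell,m,n}$ (via the factor $\Phi_2(t)^4$) and $\gcd(P_{\ell,m,n},Q_{\ell,m,n})=1$ by the previous theorem. Let $g_{\ell,m,n}(u)\in\Z[u]$ of degree $9$ (viewed in $u=t^2$) be the auxiliary polynomial furnished by Proposition \ref{root1}; its coefficients are affine expressions in $(\ell,m,n)$ computable from (\ref{Qlmn}).

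For part (1), by Proposition \ref{root1} it suffices to show that $g_{\ell,m,n}$ has exactly seven positive and exactly two negative real roots. I would apply Descartes' rule of signs twice: using the constraints (\ref{ine}), verify that the coefficient sequence of $g_{\ell,m,n}(u)$ exhibits exactly $7$ sign changes and that of $g_{\ell,m,n}(-u)$ exhibits exactly $2$. Since $7+2=9=\deg g_{\ell,m,n}$, the Descartes inequalities are saturated, forcing the exact counts and, in particular, forcing all nine roots of $g_{\ell,m,n}$ to be real.

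For part (2), the palindromic structure of $Q_{\ell,m,n}$ pairs the two real roots as $(\alpha_{\ell,m,n},1/\alpha_{\ell,m,n})$ and $(\beta_{\ell,m,n},1/\beta_{\ell,m,n})$, from which the displayed chain of inequalities follows once the larger pair is labelled by $\alpha$; the identification $\tau_{\ell,m,n}=\alpha_{\ell,m,n}$ is the general fact from the discussion following (\ref{rational}). To prove $\alpha_{\ell,m,n}\to\infty$ I use Vieta's formula: the sum of all $18$ roots of $Q_{\ell,m,n}$ equals $4n+6$; the fourteen unit-circle roots contribute a real number in $[-14,14]$, and since $\beta\le\alpha$ and $1/\alpha,1/\beta<1$, one has $\beta+1/\beta+1/\alpha\le\alpha+2$. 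Combining these yields $2\alpha_{\ell,m,n}+2\ge 4n-8$, whence $\alpha_{\ell,m,n}\ge 2n-5\to\infty$.

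Part (3) is the main obstacle. Any monic quadratic factor $t^2+at+b\in\Z[t]$ of $Q_{\ell,m,n}$ satisfies $b\in\{\pm 1\}$, and the case $b=-1$ is excluded by part (1) (its two real roots would have opposite signs, whereas all real roots of $Q_{\ell,m,n}$ are positive). Thus $F(t)=t^2+at+1$ with $a\in\Z$. Substituting $s=t+t^{-1}$, write $Q_{\ell,m,n}(t)=t^{9}\widetilde{Q}_{\ell,m,n}(s)$ with $\widetilde{Q}_{\ell,m,n}\in\Z[s]$ monic of degree $9$; such an $F$ corresponds to an integer root $s=-a$ of $\widetilde{Q}_{\ell,m,n}$. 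The integer candidates in $[-2,2]$ produce the cyclotomic factors $\Phi_1^{2},\Phi_2^{2},\Phi_3,\Phi_4,\Phi_6$, none of which divides $Q_{\ell,m,n}$ by the previous theorem combined with $Q_{\ell,m,n}(\pm 1)\ne 0$. The remaining candidates are integers $k\ge 3$ with $k=\alpha+1/\alpha$ or $k=\beta+1/\beta$; by part (2) these lie in the finite range $[3,4n+7]$. For each such $k$, the value $\widetilde{Q}_{\ell,m,n}(k)$ is an affine expression in $(\ell,m,n)$, and I would rule out its vanishing by elementary arithmetic compatible with the constraints (\ref{ine})---for instance, by a residue argument modulo a small prime or by sign/parity analysis---mirroring the case-by-case approach announced in the introduction.
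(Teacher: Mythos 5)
Your plan for part (1) has a genuine logical gap. You write that because the sign-change counts satisfy $V_+ + V_- = 9 = \deg g_{\ell,m,n}$, ``the Descartes inequalities are saturated, forcing the exact counts and, in particular, forcing all nine roots of $g_{\ell,m,n}$ to be real.'' This implication is false: for $p(u) = (u-1)(u^2+1) = u^3 - u^2 + u - 1$ one has $V_+ = 3$, $V_- = 0$, so $V_+ + V_- = 3 = \deg p$, yet $p$ has only one real root. Descartes' rule gives upper bounds congruent mod $2$ to the true counts; equality of $V_+ + V_-$ with the degree does not force hyperbolicity. The paper's proof avoids this by evaluating $K_{\ell,m,n}(u)$ at the ten explicit points $u \in \{-41,-31,0,\tfrac{1}{10},\tfrac13,\tfrac12,1,2,3,9\}$, exhibiting nine sign changes and hence nine (two negative, seven positive) real roots. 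A second, subsidiary problem: even establishing $V_+ = 7$ uniformly in $(\ell,m,n)$ is not routine, because the signs of individual coefficients of $K_{\ell,m,n}(u)$ depend on the parameters --- e.g.\ the $u^5$-coefficient $-10124n + 616m + 480\ell + 6724$ is positive at $(\ell,m,n)=(0,0,0)$ but negative at $(0,5,10)$ --- so you would have to check that the \emph{total count} of sign changes is nevertheless invariant.

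For part (2), the chain $0<\tfrac{1}{\alpha}<\tfrac{1}{\beta}<1<\beta<\alpha$ requires knowing that all four real roots of $Q_{\ell,m,n}$ are positive, which you never establish; palindromy only pairs roots with reciprocals and does not rule out a negative pair. The paper gets positivity by evaluating $Q_{\ell,m,n}$ at $0$, $\tfrac12$, $1$. Your Vieta's-formula argument that $\tau_{\ell,m,n} \geq 2n-5 \to \infty$ is correct and is a nice alternative to the paper's bracketing of $1/\alpha_{\ell,m,n}$ between $1/(4n+m+\ell+6)$ and $1/(4n+5)$, but it depends on parts (1) and (2) being complete. For part (3), the reduction via $s=t+t^{-1}$ to ruling out integer roots of $\widetilde{Q}_{\ell,m,n}(s)$ is sound and equivalent to the paper's elimination, but the final sentence is a placeholder: the candidate integers $s$ fill an interval of length growing with $n$, and you do not supply the uniform argument. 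The paper's proof closes this by locating all nine real roots of the auxiliary degree-$9$ polynomial $f_{\ell,m,n}(a)$ in the integer-free open intervals $(-(4n+6),-(4n+5))$ and the eight gaps inside $(-3,2)$ punctured at $-2,-1,0,1$, using sign evaluations at eleven fixed points --- a uniform-in-$n$ argument that would need to be reproduced in your framework.
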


\begin{proof}
(1)\;We adapt Proposition \ref{root1} (cf. \cite[Theorem 2]{ZZ}) to $Q_{\ell,m,n}(t)$.
At first, $Q_{\ell,m,n}(\pm 1)\not=0$.
Consider the polynomial
$$
K_{\ell,m,n}(t):=(t-i)^{18}Q_{\ell,m,n}\left( \displaystyle{\frac{t+i}{t-i}} \right),
$$ and replace $u=t^2$. 
Then $K_{\ell, m, n}(u)$ can be written as follows.\\ \\
$ \displaystyle
     K_{\ell,m,n}(u)=  4\{  (8n+8)u^9+(147n+45m+30\ell+207)u^8
            -(3068n+360m+160\ell+3148)u^7+(11256n+364m-184\ell+7208)u^6
            -(10124n-616m-480\ell-6724)u^5-(7162n+722m-532\ell+32018)u^4
            +(12268n+40m-96\ell+27964)u^3-(4608n-428m+120\ell+8528)u^2 
            +(532n-168m+32\ell+836)u-(17n-13m+2\ell+21)\}            
$\\ \\
By considering the signs of $K_{\ell,m,n}(u)$ on the real line, it has seven positive real roots and two negative real roots:\\
\begin{tabular}{|c|c|c|c|c|c|c|c|c|c|c|}
\hline 
                          $u$&$-41$&$-31$&$0$&$1/10$&$1/3$&$1/2$&$1$&$2$&$3$&$9$\\
\hline                          
 sign$(K_{\ell,m,n}(u))$      &$-$   &$+$   &$-$   &$+$     &$-$     &$+$   &$-$ &$+$&$-$&$+$\\
\hline
\end{tabular}\\ \\
For example, by (\ref{ine}),
$$\frac{1}{4}K_{\ell,m,n}(0)\\
 =-21 - 2 \ell + 13 m - 17 n\\
 \leq -21 - 2 \ell + 13 n - 17 n\\
 =-21-2\ell-4n<0.$$
Then by Proposition \ref{root1}, $Q_{\ell,m,n}(t)$ has exactly four real roots, 
and all other roots are complex and on the unit circle.

(2)\;Observe that $Q_{\ell,m,n}(0)=1>0$, $Q_{\ell,m,n}(\frac{1}{2})<0$, and $Q_{\ell,m,n}(1)=32+32n>0$.
Hence
$$
0< \frac{1}{\alpha _{\ell,m,n}} <\frac{1}{2} < \frac{1}{\beta _{\ell,m,n}} <1< \beta _{\ell,m,n} < \alpha _{\ell,m,n}.
$$
Furthermore, $Q_{\ell,m,n}(\frac{1}{4n+5})<0$ and $Q_{\ell,m,n}(\frac{1}{4n+m+l+6})>0$, that is, 
$$
\frac{1}{4n+m+\ell+6}<\frac{1}{\alpha_{\ell,m,n}}=\frac{1}{\tau _{\ell,m,n}} < \frac{1}{4n+5},
$$
for all $n \geq 0$.
This implies $\tau_{\ell,m,n}\rightarrow \infty$ as $n \rightarrow \infty$.

(3)\;Let us fix $(\ell,m,n)$.
Assume that $Q_{\ell, m,n}(t)$ is written as
$$Q_{\ell,m,n}(t)=(1+at+t^2)(1+\sum _{k=1}^{8}b_kt^k+\sum _{k=1}^{7}b_{8-k}t^{k+8}+t^{16})$$ where $a$ and $b_k$ are integers.
Then by comparing the coefficients of both sides, we get the simultaneous equations for $a, b_1, b_2,..., b_8$:
\begin{equation}
\left\{
\begin{array}{l}
b_1=-a+(-6-4n) \\
b_2=-ab_1-1+(3-m+2n)\\
b_3=-ab_2-b_1+(-5 - \ell + m - 3 n)\\
b_4=-ab_3-b_2+(5 - 3 m + 5 n)\\
b_5=-ab_4-b_3+(-1 + 4 m - n)\\
b_6=-ab_5-b_4+(9 + \ell - 4 m + 8 n)\\
b_7=-ab_6-b_5+(-\ell + 5 m)\\
b_8=-ab_7-b_6+(11 + \ell - 5 m + 10 n)\\
ab_8+2b_7-(-2 + 6 m - 2 n)=0
\end{array}
\right.
\label{equations}
\end{equation}
By using the method of successive substitution inductively, the last equation of (\ref{equations}) is 
\begin{equation}
\begin{split}
f_{\ell,m,n}(a)
&:=a^9+(4n+6)a^8+(2n-m-6)a^7+(-29n-m+\ell-43)a^6+(-9n\\
&+4m+11)a^5+(63n+2m-6k+91)a^4+(11n-3m+\ell-4)a^3+(-41n\\
&+2m+10\ell-55)a^2+(-3n-m-2\ell-3)a+(6n-2m-4\ell+6)=0.
\end{split}
\label{integer1} 
\end{equation}
On the other hand, the signs of $f_{\ell,m,n}(t)$ on the real line are as follows:\\
\begin{tabular}{|c|c|c|c|c|c|c|c|c|c|c|c|}
\hline 
                               $t$&$-(4n+6)$&$-(4n+5)$&$-3$&$-2$&$-1$&$-1/2$&$0$&$1/2$&$1$&$8/5$&$2$\\
\hline                               
 sign$(f_{\ell,m,n}(t))$      &$-$   &$+$ &$+$  &$-$   &$+$     &$-$     &$+$   &$-$ &$+$&$-$&$+$\\
\hline
\end{tabular}\\ \\
For example, 
\begin{itemize}
 \item $f_{\ell,m,n}(-3)=5784 + 308 \ell + 748 m + 7368 n>0$,
 \item $f_{\ell,m,n}(-2)=-32 - 32 n<0$.
\end{itemize}
Hence $f_{\ell,m,n}(t)$ has one root in the open interval $(-(4n+6), -(4n+5))$, and has eight roots in the open interval $(-3,2)$ while $-2,-1,0,1$ are not roots of $f_{\ell,m,n}(t)$.
Therefore $f_{\ell,m,n}(t)=0$ cannot have an integer solution, which contradicts to (\ref{integer1}).
\end{proof}

Now we are interested in whether the growth rates $\tau_{\ell,m,n}$ are $2$-Salem numbers or not.
To show this, by means of Theorem \ref{poles} (1), it is sufficient to prove $Q_{\ell,m,n}(t)$ is irreducible over $\Z$.
It is already shown in Theorem \ref{poles} (3) that $Q_{\ell,m,n}(t)$ is not described as a product of two palindromic polynomials of degree two and sixteen.
For the irreducibility of $Q_{\ell,m,n}(t)$, the next proposition plays an important role in view of the main theorem. 

\begin{prop}
\label{reducible}
If $Q_{\ell,m,n}(t)$ is not irreducible, then $Q_{\ell,m,n}(t)$ is described as a product of two distinct monic palindromic polynomials in $\Z[t]$ of even degree.
\end{prop}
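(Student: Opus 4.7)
The approach I would take is to analyze the monic irreducible factorization of $Q:=Q_{\ell,m,n}$ over $\Z[t]$ via the reciprocal involution $f\mapsto f^*(t):=t^{\deg f}f(1/t)$, defined on monic polynomials with nonzero constant term. Since $Q=Q^*$ is palindromic of degree $18$, this involution permutes the monic irreducible factors of $Q$. The plan is to show that no irreducible factor can satisfy $p\neq p^*$, so that every irreducible factor is self-reciprocal (hence palindromic of even degree), and then to group the irreducible factorization into two nontrivial palindromic pieces.

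For the preparation step I would note that Theorem \ref{poles}(2) implies $Q(0)\neq 0$ and $Q(\pm 1)\neq 0$, so together with Theorem \ref{poles}(3) every irreducible factor $p$ of $Q$ has $\deg p\geq 3$. The central observation is a pairing argument: if an irreducible $p$ has some root $\omega$ on the unit circle, then $\bar\omega=1/\omega$ is also a root of $p$ (complex conjugation), whence $\omega$ is a root of $p^*$ as well; since both $p$ and $p^*$ are monic irreducibles in $\Q[t]$ sharing a root, they coincide by uniqueness of the minimal polynomial. Consequently, if $p\neq p^*$ then $p$ has no unit-circle roots, so by Theorem \ref{poles}(1) all its roots lie in the four-element set $\{\alpha,\alpha^{-1},\beta,\beta^{-1}\}$, giving $\deg p\in\{3,4\}$. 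The case $\deg p=4$ is immediate because this set is closed under reciprocation, forcing $p=p^*$. For $\deg p=3$ I would enumerate the four $3$-subsets of $\{\alpha,\alpha^{-1},\beta,\beta^{-1}\}$; in each the reciprocal subset shares at least one element with the original (for instance $\{\alpha,\alpha^{-1},\beta\}$ and its reciprocal $\{\alpha^{-1},\alpha,\beta^{-1}\}$ share $\alpha$ and $\alpha^{-1}$), so $p$ and $p^*$ share a root, forcing $p=p^*$ and a contradiction.

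Thus every irreducible factor of $Q$ is palindromic; since a palindromic polynomial of odd degree has $-1$ as a root while $Q(-1)\neq 0$, every irreducible factor has even degree, hence degree $\geq 4$. Writing $Q=\prod p_i^{e_i}$, if $Q$ is reducible then $\sum e_i\geq 2$, and taking $A:=p_j$ and $B:=Q/p_j$ for any index $j$ produces two monic palindromic factors of even degree whose degrees sum to $18$ (the quotient of two palindromic polynomials of even degree is again palindromic, by a direct check with $*$). Distinctness $A\neq B$ is automatic, since $A=B$ would force both to have degree $9$, contradicting evenness. The step I expect to be the main obstacle is the case analysis for degree-$3$ factors in the second paragraph, although once the reciprocal-sharing mechanism is isolated each subcase reduces to a short check.
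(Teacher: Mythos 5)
Your proof is correct and pursues essentially the same strategy as the paper: use the root distribution from Theorem \ref{poles}(1) to conclude that every irreducible factor of $Q_{\ell,m,n}$ is self-reciprocal of even degree, and then split one such factor off. The paper's own proof is more compressed, simply asserting that each inversive pair of roots consists of algebraic conjugates; your reciprocal-involution argument, combined with the exclusion of quadratic factors via Theorem \ref{poles}(3) and the short case analysis in degrees $3$ and $4$, supplies the justification that the real reciprocal pairs $(\alpha,\alpha^{-1})$ and $(\beta,\beta^{-1})$ cannot be separated into two distinct irreducible factors.
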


\begin{proof}
By Theorem \ref{poles} (1), $Q_{\ell,m,n}(t)$ can be written as follows:
\begin{equation*}
\begin{split}
Q_{\ell,m,n}(t)&=(t-\alpha_{\ell,m,n})(t-\frac{1}{\alpha_{\ell,m,n}})(t-\beta_{\ell,m,n})(t-\frac{1}{\beta_{\ell,m,n}})(t-\omega_1)(t-\overline{\omega _1})\\
&\cdots (t-\omega_7)(t-\overline{\omega _7}),
\end{split}
\end{equation*}
where $\alpha_{\ell,m,n}, \frac{1}{\alpha_{\ell,m,n}}$, $\beta_{\ell,m,n}, \frac{1}{\beta_{\ell,m,n}}$ are two pairs of real roots and $\omega_1, \overline{\omega _1}$, ..., $\omega_7, \overline{\omega _7}$ are seven pairs of complex roots lying on the unit circle, and each two roots which are an inversive  pair are algebraic conjugate to each other.
Hence if $Q_{\ell,m,n}(t)$ is not irreducible, each of its factors is of even degree, and the claim follows.
\end{proof}

It is not easy to examine the irreducibility for all $(\ell,m,n)$, but we have the following result under certain restrictions.

\begin{theo}
For $n\e 1$ (mod $3$), $Q_{0,n,n}(t)$ and $Q_{n,0,n}(t)$ are irreducible over $\Z$.
As a consequence, the growth rates $\tau _{0,n,n}$ and $\tau _{n,0,n}$ are $2$-Salem numbers.
\label{main}
\end{theo}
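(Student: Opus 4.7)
The plan is to establish, under $n \equiv 1 \pmod{3}$, the irreducibility of $Q_{0,n,n}(t)$ and $Q_{n,0,n}(t)$ in $\Z[t]$; Theorem~\ref{poles}(1) together with Definition~\ref{2Salem} then force the positive root $\tau_{\ell,m,n}$ singled out by Theorem~\ref{poles}(2) to be a $2$-Salem number. By Proposition~\ref{reducible} combined with Theorem~\ref{poles}(3), any nontrivial factorization of $Q_{\ell,m,n}(t)$ in $\Z[t]$ is a product of two distinct monic palindromic polynomials of even degrees at least $4$, so only the degree partitions $(4,14)$, $(6,12)$, and $(8,10)$ need to be excluded. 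I would do this uniformly by reducing modulo $3$, since the hypothesis $n \equiv 1 \pmod 3$ makes the reduction independent of the specific value of $n$ in that class.

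For $Q_{0,n,n}(t)$, I would substitute $\ell=0$, $m=n$ into (\ref{Qlmn}) and reduce modulo $3$ using $n \equiv 1$. The coefficients then alternate as $1,-1,1,-1,\ldots$, so
$$Q_{0,n,n}(t) \;\equiv\; \sum_{k=0}^{18}(-1)^{k}t^{k} \;=\; \frac{t^{19}+1}{t+1} \;=\; \Phi_{38}(t) \pmod{3}.$$
Irreducibility of $\Phi_{38}(t)$ in $\mathbb{F}_3[t]$ is equivalent to the multiplicative order of $3$ modulo $38$ being $\phi(38)=18$, and this follows from the quick check $3^{9}\equiv -1 \pmod{38}$. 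Hence the mod-$3$ reduction of $Q_{0,n,n}(t)$ is irreducible of degree $18$ in $\mathbb{F}_3[t]$, which forces $Q_{0,n,n}(t)$ itself to be irreducible in $\Z[t]$.

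For $Q_{n,0,n}(t)$, the analogous substitution $\ell=n$, $m=0$ and mod-$3$ reduction yields a palindromic $f(t)\in\mathbb{F}_3[t]$ of degree $18$ which, crucially, is \emph{not} a cyclotomic polynomial; this lack of a clean identification is the principal obstacle. My plan is to exploit the palindromic structure via the substitution $u=t+1/t$: one has $f(t)=t^{9}\widetilde{f}(t+1/t)$ for a unique $\widetilde{f}(u)\in\mathbb{F}_3[u]$ of degree $9$, and any palindromic factorization of $f$ in $\mathbb{F}_3[t]$ descends to a factorization of $\widetilde{f}$ in $\mathbb{F}_3[u]$, so it suffices to prove that $\widetilde{f}(u)$ is irreducible over $\mathbb{F}_3$. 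Because $\deg \widetilde{f}=9$, any nontrivial factorization must produce an irreducible factor of degree $\leq 4$, so irreducibility of $\widetilde{f}$ reduces to showing that $\widetilde{f}$ has no irreducible factor of degrees $1,2,3$ or $4$; equivalently, I would verify
$$\gcd\bigl(\widetilde{f}(u),\; u^{3^{k}}-u\bigr)=1 \quad \text{in } \mathbb{F}_3[u], \qquad k=1,2,3,4.$$
The cases $k=1,2$ are immediate (absence of a root in $\mathbb{F}_3$, and nondivisibility by any of the three irreducible monic quadratics of $\mathbb{F}_3[u]$), while $k=3,4$ are routine but genuinely computational: one reduces $u^{27}$ and $u^{81}$ modulo $\widetilde{f}$ by repeated cubing and then performs a Euclidean gcd in $\mathbb{F}_3[u]$. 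Once $\widetilde{f}$ is shown irreducible, $f$ admits no palindromic factorization in $\mathbb{F}_3[t]$, hence $Q_{n,0,n}(t)$ admits none in $\Z[t]$ by Proposition~\ref{reducible}, and the theorem follows.
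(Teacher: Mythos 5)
Your route is genuinely different from the paper's. The paper invokes Proposition~\ref{reducible} to reduce to a product of two monic palindromic factors of even degree, then --- for each degree $d\in\{4,6,8\}$, with $d=2$ excluded separately by Theorem~\ref{poles}(3) --- writes out simultaneous integer equations for the unknown coefficients of a hypothetical degree-$d$ palindromic factor, reduces the resulting two polynomial conditions modulo $3$, and tabulates the residue classes of the unknowns to show no solution exists when $n\equiv 1\pmod 3$. You instead reduce $Q$ itself modulo $3$ and argue irreducibility over $\mathbb{F}_3$, which immediately gives irreducibility over $\Z$; in the $(0,n,n)$ case you don't even need Proposition~\ref{reducible}. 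Your identification $Q_{0,n,n}(t)\equiv\Phi_{38}(t)\pmod 3$ is correct (with $\ell=0$, $m=n\equiv 1$, every coefficient of~(\ref{Qlmn}) reduces to $\pm 1$ with alternating sign), and irreducibility of $\Phi_{38}$ over $\mathbb{F}_3$ via $3^9\equiv -1\pmod{38}$, so $\mathrm{ord}_{38}(3)=18=\phi(38)$, is a clean check; this is a genuine and elegant simplification over the paper's case analysis. For $Q_{n,0,n}$, the reduction mod $3$ is $f(t)=t^{18}-t^{17}-t^{16}+t^{14}+t^{13}-t^{11}+t^{10}-t^9+t^8-t^7+t^5+t^4-t^2-t+1$, and the descent to $\widetilde f(u)=u^9-u^8-u^7-u^6-u^5-u^4-u^3-u^2+u+1$ via $u=t+1/t$ is the right move; note that here you do still need Proposition~\ref{reducible} to guarantee that a putative $\Z[t]$-factorization is palindromic and hence descends, since $f$ itself might conceivably split as a product of two degree-$9$ irreducibles in $\mathbb{F}_3[t]$ even with $\widetilde f$ irreducible. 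You leave the irreducibility of $\widetilde f$ as a ``routine but genuinely computational'' verification; carrying it out confirms it: $\widetilde f$ has no roots in $\mathbb{F}_3$, is $\equiv 1$ modulo each of the three monic irreducible quadratics $u^2+1$, $u^2\pm u-1$, and one finds $u^{27}\equiv u^8+u^7-u^6+u^2+u$ and $u^{81}\equiv -u^8+u^6+u^5-u^4+u^3+u+1 \pmod{\widetilde f}$, after which the Euclidean algorithm gives $\gcd(\widetilde f,u^{27}-u)=\gcd(\widetilde f,u^{81}-u)=1$, ruling out factors of degree $3$ and $4$, so $\widetilde f$ is irreducible. Thus your plan closes; the only real gap in the write-up is that this final computation is described rather than performed. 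The trade-off between the approaches: yours handles all degree partitions in one stroke and yields the stronger conclusion that $Q$ is irreducible mod $3$, but it relies on the mod-$3$ reduction happening to be (or descend to something) irreducible, whereas the paper's coefficient-comparison scheme applies uniformly to any $(\ell,m,n)$ and any modulus and is therefore more readily adapted to other subfamilies of Coxeter dominoes.
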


\begin{proof}
It is sufficient by Proposition \ref{reducible} to prove that $Q_{0,n,n}(t)$ and $Q_{n,0,n}(t)$ do not have a monic palindromic factor of degree 4, 6, or 8 for $n\e 1$ (mod 3). 
Since the method for each case is the same, we will explain the case of degree $4$, only.
 
Let us fix $(\ell,m,n)$.
Suppose that $Q_{\ell,m,n}(t)=(1+at+bt^2+at^3+t^4)(1+\sum _{k=1}^{7}c_kt^k+\sum _{k=1}^{6}c_{7-k}t^{k+7}+t^{14})$ where $a,b$ and $c_k$ are integers.
Then by comparing the coefficients of both sides, we get the simultaneous equations for $a, b, c_1, c_2,..., c_7$:
\begin{equation}
\left\{
\begin{array}{l}
c_1=-a+(-6-4n) \\
c_2=-ac_1-b+(3-m+2n)\\
c_3=-ac_2-bc_1-a+(-5 - \ell + m - 3 n)\\
c_4=-ac_3-bc_2-ac_1-1+(5 - 3 m + 5 n)\\
c_5=-ac_4-bc_3-ac_2-c_1+(-1 + 4 m - n)\\
c_6=-ac_5-bc_4-ac_3-c_2+(9 + \ell - 4 m + 8 n)\\
c_7=-ac_6-bc_5-ac_4-c_3+(-\ell + 5 m)\\
c_6=-ac_7-bc_6-ac_5-c_4+(11 + \ell - 5 m + 10 n)\\
c_5=-ac_6-bc_7-ac_6-c_5+(-2+6m-2n)
\end{array}
\right.
\label{equations2}
\end{equation}
By using the method of successive substitution inductively, the last two equations in (\ref{equations2})
are described as follows:
\begin{enumerate}
\item 
\label{1}
$
f_{\ell,m,n}(a,b)\\
:=-1 - a^8 - b^4 - m + a^7 (-6 - 4 n) + b^2 (1 + 2 m - 3 n) + 
 a^6 (-8 + 7 b + m - 2 n) + b (\ell + m - n) + n + b^3 (2 - m + 2 n) + 
 a^4 (-11 - 15 b^2 + 6 m - 11 n + b (30 - 5 m + 10 n)) + 
 a (15 + 2 \ell + 2 m + b (-46 - 8 m - 30 n) + 
    b^2 (-15 - 3 \ell + 3 m - 9 n) + 9 n + b^3 (24 + 16 n)) + 
 a^2 (2 + 10 b^3 - \ell + 3 m + b^2 (-24 + 6 m - 12 n) - 5 n + 
    b (9 - 12 m + 21 n)) + a^5 (-29 - \ell + m - 19 n + b (36 + 24 n)) + 
 a^3 (13 - 2 \ell + 6 m + b^2 (-60 - 40 n) + 9 n + 
    b (68 + 4 \ell - 4 m + 44 n))\\
=0$,
\item
\label{2}
$
g_{\ell,m,n}(a,b)\\
:=12 + a^7 (2 - b) + 2 m + b^2 (-23 - 4 m - 15 n) + 
 b^3 (-5 - \ell + m - 3 n) + 8 n + b^4 (6 + 4 n) + 
 a^5 (12 + 6 b^2 - 2 m + b (-18 + m - 2 n) + 4 n) + 
 a^6 (12 + b (-6 - 4 n) + 8 n) + b (15 + 2 \ell + 2 m + 9 n) + 
 a^3 (6 - 10 b^3 - 8 m + b (-35 + 12 m - 23 n) + 14 n + 
    b^2 (36 - 4 m + 8 n)) + 
 a (4 b^4 + 2 \ell + 2 m + b (4 - \ell + 7 m - 11 n) + 
    b^3 (-14 + 3 m - 6 n) - 2 n + b^2 (10 - 10 m + 18 n)) + 
 a^4 (34 + 2 \ell - 2 m + b (-77 - \ell + m - 51 n) + 22 n + 
    b^2 (30 + 20 n)) + 
 a^2 (-46 - 8 m + b^3 (-36 - 24 n) + b (-7 - 6 \ell + 10 m - 3 n) - 
    30 n + b^2 (87 + 3 \ell - 3 m + 57 n))\\
=0$.
\end{enumerate}

First, consider the case $(\ell,m,n)=(0,n,n)$, that is, $Q_{0,n,n}(t)$, which is the denominator of the growth function of the Coxeter polytope obtained by gluing together $n+1$ copies of $T$ only along the facets of type $B$ (cf. Figure \ref{dominoB}).
\begin{figure}[h]
\begin{center}
 \includegraphics [width=210pt, clip]{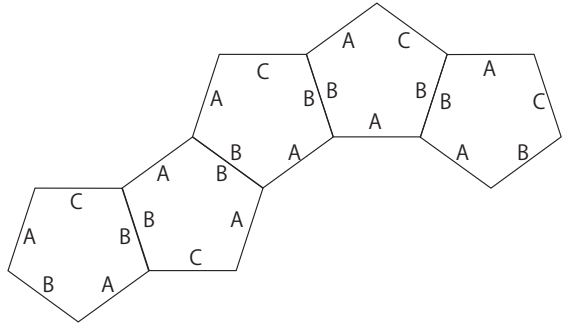}
\caption{The image of the Coxeter polytope obtained by $5$-times gluing of $T$ satisfying $(\ell, m, n)=(0,5,5)$}
\label{dominoB}
\end{center}
\end{figure}
By the assumption, there exist $a,b\in \Z$ which depend on $n$ and satisfy the two equations $(1)$ and $(2)$.
If $(a,b)\e (0,0)$ (mod 3), then $f_{0,n,n}(a,b)\e f_{0,n,n}(0,0)=-1$ (mod 3), which contradicts to (\ref{1}).
Hence $(a,b)\e (0,0)$ (mod 3) is impossible.
If $(a,b)\e (1,0)$ (mod 3), then $f_{0,n,n}(a,b)\e f_{0,n,n}(1,0)=-(26+4n)\e 1-n$ (mod 3).
So it is possible to satisfy (\ref{1}) only if  $n\e 1$ (mod 3).
On the other hand, $g_{0,n,n}(a,b)\e g_{0,n,n}(1,0)=32+8n\e -(1+n)$.
Hence it is possible to satisfy (\ref{2}) only if $n\e -1$ (mod 3), which is contradiction.
So $(a,b)\e (1,0)$ (mod 3) is also impossible.
The Table \ref{list1} below is the list of the values of $f_{0,n,n}(a,b)$ and $g_{0,n,n}(a,b)$, and possibility for $(a,b)$ to satisfy (\ref{1}) and (\ref{2}) for all the cases of $(a,b)$ modulo $3$. 
(We leave the box for the value of $g_{0,n,n}(a,b)$ empty if the value of $f_{0,n,n}(a,b)$ gives us  sufficient information.)
\begin{table}
\begin{tabular}{|c|c|c|c|} 
\hline 
$(a,b)$&$f_{0,n,n}(a,b)$&$g_{0,n,n}(a,b)$&\\
\hline
$(0,0)$&$-1$&$$&impossible\\
\hline 
$(1,0)$&$-26-4n\e 1-n$&$32+8n\e -(1+n)$&impossible\\
\hline 
$(-1,0)$&$-12-12n\e 0$&$-8-8n\e 1+n$&possible only if $n\e -1$\\
\hline 
$(0,1)$&$1$&$$&impossible\\
\hline 
$(0,-1)$&$-3-2n\e n$&$-15-14n\e n$&possible only if $n\e 0$\\
\hline 
$(1,1)$&$0$&$-2-2n\e 1+n$&possible only if $n\e -1$\\
\hline 
$(-1,1)$&$0$&$-6-4n\e -n$&possible only if $n\e 0$\\
\hline 
$(1,-1)$&$-280-114n\e -1$&$$&impossible\\
\hline 
$(-1,-1)$&$48+54n\e 0$&$78+82n\e n$&possible only if $n\e0$\\
\hline
\end{tabular}
\caption{The list of the values of $f_{0,n,n}(a,b)$ and $g_{0,n,n}(a,b)$ and possibility for $(a,b)$ to satisfy $f_{0,n,n}(a,b)=g_{0,n,n}(a,b)=0$ for all the cases of $(a,b)$ modulo $3$}
\label{list1}
\end{table}
Therefore there are no integers $a$ and $b$ satisfying $f_{0,n,n}(a,b)=g_{0,n,n}(a,b)=0$ if $n \e 1$ (mod 3), which implies that $Q_{0,n,n}(t)$ has no monic palindromic factor of degree $4$ if $n \e 1$ (mod 3). 

Next consider the case $(\ell,m,n)=(n,0,n)$, that is, $Q_{n,0,n}(t)$, which is the denominator of the growth function of the Coxeter polytope obtained by gluing together $n+1$ copies of $T$ only along the facets of type $A$ (cf. Figure \ref{dominoA}).
\begin{figure}[h]
\begin{center}
 \includegraphics [width=210pt, clip]{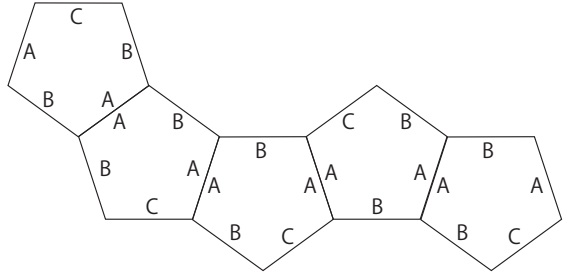}
\caption{The image of the Coxeter polytope obtained by $5$-times gluing of $T$ satisfying $(\ell, m, n)=(5,0,5)$}
\label{dominoA}
\end{center}
\end{figure}
The Table \ref{list2} is the list for all the cases of $(a,b)$ modulo $3$ as for the case $(l,m,n)=(0,n,n)$.
\begin{table}
\begin{tabular}{|c|c|c|c|} 
\hline 
$(a,b)$&$f_{n,0,n}(a,b)$&$g_{n,0,n}(a,b)$& \\
\hline
$(0,0)$&$-1+n$&$12+8n\e -n$&impossible\\
\hline 
$(1,0)$&$-26-24n\e 1$&&impossible\\
\hline 
$(-1,0)$&$-12-12n\e 0$&$-8-8n\e 1+n$&possible only if $n\equiv -1$\\
\hline 
$(0,1)$&$1$&$$&impossible\\
\hline 
$(0,-1)$&$-3-4n\e -n$&$-15-10n\e -n$&possible only if $n\e 0$\\
\hline 
$(1,1)$&$0$&$-2-2n\e 1+n$&possible only if $n\e -1$\\
\hline 
$(-1,1)$&$0$&$-6-4n\e -n$&possible only if $n\e 0$\\
\hline 
$(1,-1)$&$-280-182n\e -1+n$&$378+248n\e -n$&impossible\\
\hline 
$(-1,-1)$&$48+50n\e -n$&$78+74n\e -n$&possible only if $n\e 0$\\
\hline
\end{tabular}
\caption{The list of the values of $f_{n,0,n}(a,b)$ and $g_{n,0,n}(a,b)$ and possibility for $(a,b)$ to satisfy $f_{n,0,n}(a,b)=g_{n,0,n}(a,b)=0$ for all the cases of $(a,b)$ modulo $3$}
\label{list2}
\end{table}
Therefore there are no integers $a$ and $b$ satisfying $f_{n,0,n}(a,b)=g_{n,0,n}(a,b)=0$ if $n \e 1$ (mod 3), which implies that $Q_{n,0,n}(t)$ has no monic palindromic factor of degree $4$ if $n \e 1$ (mod 3). 
\end{proof}





\end{document}